\documentclass[11pt]{amsart}
\usepackage{amsmath,amssymb}
\setlength{\textwidth}{16.cm}
\setlength{\oddsidemargin}{0cm}
\setlength{\evensidemargin}{0cm}
\setlength{\topmargin}{0cm}
\setlength{\headheight}{0.5cm}%{0cm}
\setlength{\headsep}{0.5cm}
\setlength{\topskip}{0cm}
\setlength{\textheight}{21cm}
\setlength{\footskip}{.5cm}

\newtheorem{theorem}{Theorem}[section]

\newtheorem{proposition}{Proposition}[section]
\newtheorem{corollary}{Corollary}[section]
\newtheorem{lemma}{Lemma}[section]
\theoremstyle{definition}
\newtheorem{definition}{Definition}[section]
\newtheorem{remark}{Remark}[section]
\newtheorem{observation}{Observation}[section]

\usepackage{latexsym,amssymb}
\parskip=5pt

\begin{document}
\baselineskip=14.5pt
\title[Generalization of some weighted zero-sum theorems]{Generalization of some weighted zero-sum theorems and related Extremal sequence} 

\author{Subha Sarkar}
%\address[Subha Sarkar]{Harish-Chandra Research Institute, HBNI, Jhunsi, Prayagraj (Allahabad) 211019, India\\
%{\bf Current Address:} Ramakrishna Mission Vivekananda Educational and Research Institute (RKMVERI), P.O. Belur Math, Howrah, 711202, West Bengal, India}
\address {Department of Mathematics, Birla Institute of Technology Mesra-835 215 (Ranchi), Jharkhand, India}
\email[Subha Sarkar]{subhasarkar2051993@gmail.com } 
%\address{Accepted Version in Proc. Indian Acad. Sci. Math. Sci.}

\begin{abstract}
Let $G$ be a finite abelian group of exponent $n$ and let $A$ be a non-empty subset of $[1,n-1]$. The Davenport constant of $G$ with weight $A$, denoted by $D_A(G)$, is defined to be the least positive integer $\ell$ such that any sequence over $G$ of length $\ell$ has a non-empty $A$-weighted zero-sum subsequence. Similarly, the combinatorial invariant $E_{A}(G)$ is defined to be the least positive integer $\ell$ such that any sequence over $G$ of length $\ell$ has an $A$-weighted zero-sum subsequence of length $|G|$. In this article, we determine the exact value of $D_A(\mathbb{Z}_n)$, for some particular values of $n$, where $A$ is the set of all cubes in $\mathbb{Z}_n^*$. We also determine the structure of the related extremal sequence in this case.
\end{abstract}

\maketitle

\section{Introduction}
Let $G$ be a finite abelian group (written additively) and let exp($G$) be the {\it exponent} of the group $G$. We denote the free abelian monoid with basis $G$ by $\mathcal{F}(G)$. That is, every element $S \in \mathcal{F}(G)$ has a unique representation of the form $$S = \prod_{g \in G} g^{\mathsf{v}_g(S)} \qquad \mbox{ with } \mathsf{v}_g(S) \in  \mathbb{N}_0.$$ We call an element $S$ of $\mathcal{F}(G)$ to be a {\it sequence over $G$}.

\medskip

\begin{definition}
Let $S$ and $T$ be two sequences over $G$. Then $T$ is said to be a {\it subsequence of $S$} if $\mathsf{v}_g(T) \leq \mathsf{v}_g(S)$ for all $g \in G$. We denote a subsequence $T$ of $S$ by $T \mid S$. 
\end{definition}

\medskip

\begin{definition}
A sequence $S=g_1 \cdot g_2 \cdot \ldots \cdot g_t$ over $G$ is said to be {\it a zero-sum sequence} if $\sigma(S)=\displaystyle \sum_{i=1}^t g_i= 0$, where $0$ is the identity element of $G$.
\end{definition}

%For a finite abelian group $G$, we now define the Davenport constant $D(G)$ and the constant $\mathsf{E}(G)$  as follows.

\begin{definition}
For a finite abelian group $G$, the {\it Davenport constant} $D(G)$ is defined to be the least positive integer $\ell$ such that any sequence over $G$ of length $\ell$ has a non-empty zero-sum subsequence.
\end{definition}

\begin{definition}
For a finite abelian group $G$, the constant $\mathsf{E}(G)$ is defined to be the least positive integer $\ell$ such that any sequence over $G$ of length $\ell$ has a zero-sum subsequence of length $\vert G \vert$.
\end{definition}

%In \cite{egz}, it is proved that $\mathsf{E}(\mathbb{Z}/n \mathbb{Z}) = 2n-1$. Moreover,
Gao \cite{gao} proved that, for a finite abelian group $G$ the constants $D(G)$ and $\mathsf{E}(G)$ are related by the relation
\begin{equation} \label{2eq1}
\mathsf{E}(G) = D(G)+|G|-1.
\end{equation}

The generalization of these constants with weights has been considered in \cite{abpr,ac,acfkp,th}. First we define the following.

\begin{definition}
Let $G$ be a finite abelian group of exponent $n$ and let $A$ be a non-empty subset of $[1,n-1]$. A sequence $S = g_1 \cdot g_2 \cdot \ldots \cdot g_k$ over $G$ is said to be an {\it $A$-weighted zero-sum sequence} if there exist $a_1, \ldots , a_k$ in $A$ such that $\displaystyle \sum_{i=1}^k a_i g_i = 0$. 
\end{definition}

For a finite abelian group $G$ of exponent $n$ and a non-empty subset $A$ of $[1,n-1]$, we define the weighted Davenport constant $D_A(G)$ and the constant $\mathsf{E}_A(G)$ as follows.

\begin{definition}
The {\it weighted Davenport constant of $G$ with weight $A$}, denoted by $D_A(G)$, is defined to be the least positive integer $\ell$ such that any sequence over $G$ of length $\ell$ has a non-empty $A$-weighted zero-sum subsequence.
\end{definition}

\begin{definition}
The constant $\mathsf{E}_A(G)$ is defined to be the least  positive integer $\ell$ such that any sequence over $G$ of length $\ell$ has an $A$-weighted zero-sum subsequence of length $|G|$.
\end{definition}

When $A=\{1\}$, the constants $D_A(G)$ and $\mathsf{E}_A(G)$ are the constants $D(G)$ and $\mathsf{E}(G)$ respectively.
%In \cite{acfkp}, by introducing $\mathsf{E}_A(G)$ and $D_A(G)$ for the group $\mathbb{Z}/n\mathbb{Z}$, it was proved that $D_{\{ \pm 1\}}(\mathbb{Z}/n\mathbb{Z}) = 1+ \lfloor \log_2 n \rfloor$ and $\mathsf{E}_A(\mathbb{Z}/n\mathbb{Z})=n+ \lfloor \log_2 n \rfloor$. Therefore the weighted generalization of Gao's relation \eqref{2eq1} namely, 
%\begin{equation}\label{2eq2}
%\mathsf{E}_A(G) = D_A(G)+|G|-1
%\end{equation}
%holds true for the group $G=\mathbb{Z}/n\mathbb{Z}$ and the weight set $A= \{ \pm 1\}$. For every finite abelian group $G$ and weight set $A$, this relation has been expected by Adhikari and Rath \cite{ar} and conjectured by Thangadurai \cite{th}. It has been proved by Yuan and Zeng \cite{yz} for cyclic groups. 
Grynkiewicz, Marchan and Ordaz \cite{gmo} proved that the weighted generalization of Gao's relation \eqref{2eq1} namely, 
\begin{equation}\label{2eq2}
\mathsf{E}_A(G) = D_A(G)+|G|-1
\end{equation}
holds true for a general finite abelian group $G$ and weight set $A$.

We fix some notations first. For a positive integer $n$, we denote the number of prime factors of $n$ counted with multiplicity (respectively, without multiplicity) by $\Omega(n)$ (respectively, by $\omega(n$)). We also denote $\mathbb{Z}_n$ to denote the cyclic group $\mathbb{Z}/n\mathbb{Z}$ and use $\mathbb{Z}_n^*$ to denote the group of units of $\mathbb{Z}_n$.

In \cite{acfkp}, it was proved that $\mathsf{E}_A(\mathbb{Z}_n) \geq n+\Omega(n)$. They also conjectured that it is the exact value. This conjecture has been proved in \cite{gr} and \cite{luca} independently.

%\begin{theorem}\label{gl}
%Let $n>1$ be an integer. Then $D_{A}(\mathbb{Z}/n\mathbb{Z}) = 1+\Omega(n)$ and $\mathsf{E}_A(\mathbb{Z}/n\mathbb{Z}) = n+\Omega(n)$.
%\end{theorem}

%Now let us consider the weight set to be the set of squares mod $n$,  namely $R_n = \{ a^2 \pmod n \;|\; a \in (\mathbb{Z}/n\mathbb{Z})^*\}$.

%In \cite{adu}, Adhikari, David and Urroz considered the problem of determining the values of $D_{R_n}(\mathbb{Z}/n\mathbb{Z})$ and $\mathsf{E}_{R_n}(\mathbb{Z}/n\mathbb{Z})$ for square-free $n$. The case when $n$ is a prime was already considered by Adhikari and Rath in \cite{ar}. Later Chintamani and Moriya \cite{CM} extended some results of \cite{adu}.
%
%In \cite{subha}, we prove similar results as in \cite{adu,ar,CM} but for the weight set to be the set of all cubes in $(\mathbb{Z}/n\mathbb{Z})^*$. For a positive integer $n$, we denote the set of all cubes in $(\mathbb{Z}/n\mathbb{Z})^*$ by $T_n = \{a^3 \pmod n \;|\; a \in (\mathbb{Z}/n\mathbb{Z})^*\}$. We prove the following theorems.
%
%
%
%
%
%
%
%
Adhikari, David and Urroz \cite{adu} considered the problem of determining the values of $D_{R_n}(\mathbb{Z}_n)$ and $E_{R_n}(\mathbb{Z}_n)$ for squarefree $n$, where $R_n = \{a^2 \pmod n : a \in \mathbb{Z}_n^* \}$. The case when $n$ is a prime was already considered by Adhikari and Rath in \cite{ar}. Chintamani and Moriya \cite{CM} extended some results of Adhikari, David and Urroz for non square-free integer $n$.

In \cite{subha}, we proved an upper bound of $D_{T_n}(\mathbb{Z}_n)$ and $\mathsf{E}_{T_n}(\mathbb{Z}_n)$ for the weight set $T_n = \{a^3 \pmod n \;|\; a \in \mathbb{Z}_n^*\}$ to be the set of all cubes in $\mathbb{Z}_n^*$. More precisely, we proved the following:

{\it If $n=7^ln_1n_2$ is an odd integer such that $n_1 =\prod_{i =1}^r p_i^{e_i}$ and $n_2=\prod_{j =1}^s q_j^{f_j}$  with primes $p_i \equiv 1 \pmod 3$ and $q_j \equiv 2 \pmod 3$ and $7 \nmid n_1$, then 
\begin{itemize}
\item[(i)] $D_{T_n}(\mathbb{Z}_n) \leq 3\Omega(n_1)+\Omega(n_2)+5l+1$, and 
\smallskip
\item[(ii)] $E_{T_n}(\mathbb{Z}_n) \leq n+3\Omega(n_1)+\Omega(n_2)+5l$.
\end{itemize}}

In this article, one of our results is to prove the exact value of $D_{T_n}(\mathbb{Z}_n)$ when $n$ is a square-free odd integer which is not divisible by 7 and 13. More precisely, we prove the following theorem.

\begin{theorem}\label{mt1}
Let $n=n_1n_2$ be a square-free odd integer which is not divisible by 7 and 13 such that $n_1 =\prod_{i =1}^r p_i$ and $n_2=\prod_{j =1}^s q_j$  with primes $p_i \equiv 1 \pmod 3$ and $q_j \equiv 2 \pmod 3$. Then we have
\begin{itemize}
\item[(i)] $D_{T_n}(\mathbb{Z}_n) = 2\Omega(n_1)+ \Omega(n_2)+1$, and 
\smallskip
\item[(ii)] $E_{T_n}(\mathbb{Z}_n) = n+2\Omega(n_1)+ \Omega(n_2)$.
\end{itemize}
\end{theorem}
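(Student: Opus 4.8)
The plan is to prove part (i) by establishing matching upper and lower bounds for $D_{T_n}(\mathbb{Z}_n)$, and then to deduce part (ii) immediately from the weighted Gao relation \eqref{2eq2}, since $|G|=n$ gives $E_{T_n}(\mathbb{Z}_n) = D_{T_n}(\mathbb{Z}_n) + n - 1 = n + 2\Omega(n_1)+\Omega(n_2)$. So the entire content is part (i).

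For the \textbf{upper bound} $D_{T_n}(\mathbb{Z}_n) \le 2\Omega(n_1)+\Omega(n_2)+1$, I would first reduce to the prime case via the standard inductive/CRT argument used for multiplicative weight sets: writing $n = n_1 n_2$ and factoring through the primes $p_i \equiv 1 \pmod 3$ and $q_j \equiv 2 \pmod 3$, one shows $D_{T_n}(\mathbb{Z}_n) - 1 \le \sum_i (D_{T_{p_i}}(\mathbb{Z}_{p_i}) - 1) + \sum_j (D_{T_{q_j}}(\mathbb{Z}_{q_j}) - 1)$ by peeling off $A$-weighted zero-sum subsequences prime-by-prime (this is exactly the mechanism giving the bound in \cite{subha}, only now with the better prime-level constants). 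The key arithmetic input is then: for a prime $q \equiv 2 \pmod 3$, every unit is a cube (since $\gcd(3, q-1)=1$), so $T_q = \mathbb{Z}_q^*$ and $D_{T_q}(\mathbb{Z}_q) = 2$; while for a prime $p \equiv 1 \pmod 3$ with $p \ne 7, 13$, the cubes $T_p$ form the index-$3$ subgroup of $\mathbb{Z}_p^*$, and one must show $D_{T_p}(\mathbb{Z}_p) = 3$, i.e. any sequence $(a,b,c)$ of three elements of $\mathbb{Z}_p$ has a $T_p$-weighted zero-sum subsequence. The non-trivial content here is showing that for such primes the cubic residues are a ``large enough'' subset: e.g. that $T_p \cup (-T_p)$, or sums of two cosets, cover everything needed — this is where the exclusion of $7$ and $13$ enters, presumably because for $p = 7, 13$ the cubic residue classes are too small / too structured (for $p=7$, $|T_7|=2$; for $p=13$, $|T_{13}|=4$) to force a weighted zero-sum among only three terms. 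I expect the prime-level analysis for $p \equiv 1 \pmod 3$ — a character-sum or explicit covering argument showing $(x T_p + y T_p) \cap \{-z T_p\} \ne \emptyset$ for all nonzero $x,y,z$ — to be the \textbf{main obstacle}; one likely invokes a Cauchy--Davenport-type or Weil-bound estimate on the number of solutions, valid once $p$ is large enough, and then checks the small remaining primes $p = 19, 31, \dots$ by hand, with $7$ and $13$ genuinely failing.

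For the \textbf{lower bound} $D_{T_n}(\mathbb{Z}_n) \ge 2\Omega(n_1)+\Omega(n_2)+1$, I would exhibit an explicit extremal sequence of length $2\Omega(n_1)+\Omega(n_2)$ over $\mathbb{Z}_n$ with no non-empty $T_n$-weighted zero-sum subsequence. The natural construction, via CRT, is to take for each prime $q_j \equiv 2 \pmod 3$ one coordinate that is a non-unit-avoiding obstruction (a single element generating $\mathbb{Z}_{q_j}$, say $1$, contributing one term), and for each prime $p_i \equiv 1 \pmod 3$ two elements $g, h$ whose nonzero $T_{p_i}$-weighted subsums $a g$, $b h$, $a g + b h$ ($a, b \in T_{p_i}$) never vanish mod $p_i$ — possible precisely because $T_{p_i}$ is a proper subgroup, so one can choose $g = 1$ and $h$ a generator of $\mathbb{Z}_{p_i}^* / T_{p_i}$-representative making $T_{p_i} \cup h T_{p_i} \cup (T_{p_i} + h T_{p_i})$ miss $0$. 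Assembling these coordinatewise via CRT into a single sequence of the right length, and checking that any non-empty subsequence fails to be $T_n$-weighted zero-sum at some coordinate, gives the bound; this also yields the structure of the extremal sequences promised in the abstract (all extremal sequences are, up to the action of $T_n$ and automorphisms, of this block form). The lower-bound construction is routine once the prime-level obstructing configurations are identified, so the real work, and the reason for the hypotheses $7 \nmid n$, $13 \nmid n$, lies entirely in the prime-case upper bound $D_{T_p}(\mathbb{Z}_p) = 3$.
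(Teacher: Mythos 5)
Your overall architecture is the paper's, with the roles of $D$ and $E$ swapped: the paper proves the upper bound for $E_{T_n}(\mathbb{Z}_n)$ directly (Proposition \ref{2prop1}, an induction on $\Omega(n)$ combined with CRT) and then extracts $D_{T_n}(\mathbb{Z}_n)$ from the Grynkiewicz--Marchan--Ordaz relation \eqref{2eq2}, whereas you propose to prove the bound for $D$ and read off $E$ from the same relation; either direction is legitimate. Your lower-bound construction (one obstructing term per prime $q\equiv 2\pmod 3$, two per prime $p\equiv 1\pmod 3$, glued by CRT) is exactly the paper's Lemma \ref{lowerbound} and Proposition \ref{lowerbound2}, and is fine: since $-1=(-1)^3\in T_p$, any $h\notin T_p$ gives a two-term block $1\cdot h$ with no $T_p$-weighted zero-sum subsequence.

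There is, however, a genuine gap in your upper-bound reduction. The inequality $D_{T_n}(\mathbb{Z}_n)-1\le \sum_i\bigl(D_{T_{p_i}}(\mathbb{Z}_{p_i})-1\bigr)+\sum_j\bigl(D_{T_{q_j}}(\mathbb{Z}_{q_j})-1\bigr)$ does not follow from ``peeling off weighted zero-sum subsequences prime-by-prime'': such subadditivity over coprime moduli is false in general (for $A=\{1\}$ it would give $D(\mathbb{Z}_{mn})\le m+n-1$, while $D(\mathbb{Z}_{mn})=mn$), because a zero-sum subsequence modulo $p$ cannot be re-weighted modulo the remaining primes, and different primes would select different subsequences. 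Consequently $D_{T_p}(\mathbb{Z}_p)=3$ is not the right key input. What the argument actually requires --- and what the paper imports as Lemma \ref{three} (from \cite{shameek}) and Lemma \ref{two} (from \cite{gr}) --- is the stronger statement that if a sequence over $\mathbb{Z}_p$ contains at least three units (resp.\ two units, for $q\equiv 2\pmod 3$), then the \emph{entire} sequence admits weights in $T_p$ summing to zero; equivalently, $x_1T_p+x_2T_p+x_3T_p=\mathbb{Z}_p$ for any three units $x_1,x_2,x_3$ when $p\equiv 1\pmod 3$, $p\ne 7,13$, not merely $0\in x_1T_p+x_2T_p+x_3T_p$ as in your covering condition $(xT_p+yT_p)\cap(-zT_p)\ne\emptyset$. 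The full covering is what allows the leftover terms to be absorbed and a single common subsequence to be made zero simultaneously modulo every prime via CRT; the induction in the paper's Proposition \ref{2prop1} then disposes of primes that see too few units by passing to $n/p$. Once you replace your prime-level input by this stronger lemma (which is also exactly where the exclusion of $7$ and $13$ enters, since $|T_7|=2$ and $|T_{13}|=4$ are too small for three cosets to cover), your plan goes through and coincides with the paper's proof.
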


\medspace

For a non-empty subset $A$ of $[1,n-1]$, it is clear from the definition of $D_A(\mathbb{Z}_n)$ that, there is a sequence over $\mathbb{Z}_n$ of length $D_{A}(\mathbb{Z}_n)-1$ which does not have any non-empty $A$-weighted zero-sum subsequence. We call such sequence an $A$-extremal sequence.

\medskip

\begin{definition} \label{equi}
Let $A$ be a subgroup of $\mathbb{Z}_n^*$. We say two sequences $x_1 \cdot \ldots \cdot x_k$ and $y_1 \cdot \ldots \cdot y_k$ over $\mathbb{Z}_n$ are {\it equivalent with respect to $A$} if there are $a_1, a_2, \ldots, a_k \in A$, $c \in \mathbb{Z}_n^*$ and a permutation $\sigma \in S_k$ such that $y_i=c a_i x_{\sigma(i)}$ for all $1\leq i \leq k$.
\end{definition}

It is clear from Definition \ref{equi} that, if $x_1 \cdot \ldots \cdot x_k$ and $y_1 \cdot \ldots \cdot y_k$ are equivalent with respect to $A$, then $x_1 \cdot \ldots \cdot x_k$ does not have any non-empty A-weighted zero-sum subsequence if and only if $y_1 \cdot \ldots \cdot y_k$ does not have any non-empty A-weighted zero-sum
subsequence. Thus when the weight set $A$ is a subgroup of $\mathbb{Z}_n^*$, we want to characterize the $A$-extremal sequences up to this equivalence.

\medskip

The structure of $A$-extremal sequences for the weight sets $A= \mathbb{Z}_n^*$ and $A=R_n$ has been studied in \cite{amp} and in \cite{ahms} respectively. Here, for a square-free odd integer $n$ which is not divisible by 7 and 13, we characterize the $T_n$-extremal sequence as follows. 

\begin{theorem}\label{extremal}
Let $n=n_1n_2$ be a square-free odd integer which is not divisible by 7 and 13 such that $n_1 =\prod_{i =1}^r p_i$ and $n_2=\prod_{j =1}^s q_j$  with primes $p_i \equiv 1 \pmod 3$ and $q_j \equiv 2 \pmod 3$. Let $S: x_1 \cdot x_2 \cdot \ldots \cdot x_{\ell}$ be a sequence over $\mathbb{Z}_n$ of length $\ell=D_{T_n}(\mathbb{Z}/n\mathbb{Z})-1=2\Omega(n_1)+ \Omega(n_2)$ such that $S$ does not have any non-empty $T_n$-weighted zero-sum subsequence. Then $S$ is equivalent to one of the following sequences:

{\noindent \sf Case 1.} 

There exists a prime divisor $p$ of $n_1$ such that $p$ divides all the terms of $S$, except two, say $x_1$ and $x_2$.

Let $n^{'}=\frac{n}{p}$ and $x_i=py_i$ for all $3\leq i\leq \ell$. Then the sequence $S_1^{'}:y_3 \cdot y_4 \cdot \ldots \cdot y_{\ell}$ over $\mathbb{Z}_{n^{'}}$ is an $T_{n^{'}}$-extremal sequence in $\mathbb{Z}_{n^{'}}$, and the image of the sequence $x_1 \cdot x_2$ under the natural map $\mathbb{Z}_n \rightarrow \mathbb{Z}_p$ is $T_p$-extremal.

\medspace

{\noindent \sf Case 2.}

There exists a prime divisor $p$ of $n_2$ such that $p$ divides all the terms of $S$, except one, say $x_1$.

Let $n^{'}=\frac{n}{p}$ and $x_i=py_i$ for all $2\leq i\leq \ell$. Then the sequence $S_1^{'}:y_2 \cdot y_3 \cdot \ldots \cdot y_{\ell}$ over $\mathbb{Z}_{n^{'}}$ is an $T_{n^{'}}$-extremal sequence in $\mathbb{Z}_{n^{'}}$.
\end{theorem}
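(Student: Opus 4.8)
The plan is to analyse $S$ one prime at a time. Since $n$ is square-free, the Chinese Remainder Theorem gives $\mathbb{Z}_n \cong \prod_{p \mid n}\mathbb{Z}_p$, and correspondingly $\mathbb{Z}_n^* \cong \prod_{p\mid n}\mathbb{Z}_p^*$ and $T_n \cong \prod_{p\mid n}T_p$; the crucial consequence is that in an $A$-weighted sum over $\mathbb{Z}_n$ the residues of the weights modulo distinct primes may be chosen independently. Fix a prime $p \mid n$, set $n' = n/p$, and let $\phi: \mathbb{Z}_n\to\mathbb{Z}_p$ and $\psi:\mathbb{Z}_n\to\mathbb{Z}_{n'}$ be the projections. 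Let $U_p=\{\, i : p\nmid x_i\,\}$ index the terms of $S$ that are units modulo $p$, put $k_p=|U_p|$, and note that the remaining terms lie in the subgroup $p\mathbb{Z}_n$, which is canonically isomorphic (compatibly with multiplication by $T_n$) to $\mathbb{Z}_{n'}$; under this identification they form a sequence $R_p$ over $\mathbb{Z}_{n'}$ of length $\ell-k_p$. Because multiplying a term by a unit does not change its divisibility by $p$, both $k_p$ and the equivalence class of $R_p$ depend only on the $T_n$-equivalence class of $S$. Finally, applying Theorem \ref{mt1} to $p$ and to $n'$ yields the length identity $\ell = \bigl(D_{T_p}(\mathbb{Z}_p)-1\bigr)+\bigl(D_{T_{n'}}(\mathbb{Z}_{n'})-1\bigr)$, where $D_{T_p}(\mathbb{Z}_p)=3$ if $p\mid n_1$ and $D_{T_p}(\mathbb{Z}_p)=2$ if $p\mid n_2$.

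The easy half is a lower bound for $k_p$. If $R_p$ had a non-empty $T_{n'}$-weighted zero-sum subsequence, then lifting its weights through the surjection $T_n\twoheadrightarrow T_{n'}$ and using that every term of $R_p$ lies in $p\mathbb{Z}_n$ (hence is annihilated modulo $p$), we would obtain a non-empty $T_n$-weighted zero-sum subsequence of $S$ — impossible. Therefore $\ell-k_p\le D_{T_{n'}}(\mathbb{Z}_{n'})-1$, i.e.\ $k_p\ge D_{T_p}(\mathbb{Z}_p)-1$, for \emph{every} prime $p\mid n$. Suppose for a moment that some prime $p$ attains equality, $k_p=D_{T_p}(\mathbb{Z}_p)-1$. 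Then $R_p$ is a $T_{n'}$-extremal sequence, and I claim the sequence $\overline{S}=\prod_{i\in U_p}\phi(x_i)$ over $\mathbb{Z}_p$ is $T_p$-extremal. Indeed, if $\overline{S}$ had a non-empty $T_p$-weighted zero-sum subsequence supported on $I\subseteq U_p$, lift its weights to $a_i\in T_n$; then $w:=\sum_{i\in I}a_i x_i\in p\mathbb{Z}_n$, and adjoining $w$, regarded as an element of $\mathbb{Z}_{n'}$, to $R_p$ gives a sequence over $\mathbb{Z}_{n'}$ of length $1+(\ell-k_p)=D_{T_{n'}}(\mathbb{Z}_{n'})$, hence one with a non-empty $T_{n'}$-weighted zero-sum subsequence. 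If that subsequence avoids $w$, it is a $T_{n'}$-weighted zero-sum subsequence of the extremal $R_p$, a contradiction; if it uses $w$, say $b_0 w+\sum_{j\in J}b_j x_j=0$ in $\mathbb{Z}_{n'}$ with $J$ indexing terms of $R_p$, then lifting $b_0$ and the $b_j$ to $c_0,c_j\in T_n$ and using that $w$ and the $x_j$ $(j\in J)$ vanish modulo $p$, one gets $\sum_{i\in I}(c_0 a_i)x_i+\sum_{j\in J}c_j x_j=0$ in $\mathbb{Z}_n$, again a non-empty $T_n$-weighted zero-sum subsequence of $S$ — contradiction. Thus $\overline{S}$ is $T_p$-extremal of length $D_{T_p}(\mathbb{Z}_p)-1$, which is $2$ when $p\mid n_1$ and $1$ when $p\mid n_2$; these are precisely Cases 1 and 2 of the theorem (in Case 2 a single unit of $\mathbb{Z}_p$ is automatically $T_p$-extremal, which is why no further condition on $x_1$ appears). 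The case $\Omega(n)=1$ is then immediate, since there $n'=1$ and $S=\overline{S}$.

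So everything comes down to producing a prime $p\mid n$ with $k_p=D_{T_p}(\mathbb{Z}_p)-1$ — equivalently, by the lower bound, to ruling out that $k_p\ge D_{T_p}(\mathbb{Z}_p)$ for \emph{every} prime $p\mid n$ — and I expect this to be the main obstacle. The plan is to assume $k_p\ge D_{T_p}(\mathbb{Z}_p)$ for some $p$ (so $R_p$ has length at most $D_{T_{n'}}(\mathbb{Z}_{n'})-2$ while $U_p$ has at least $D_{T_p}(\mathbb{Z}_p)\ge 2$ terms) and to build a non-empty $T_{n'}$-weighted zero-sum subsequence $\psi(S)|_I$ of $\psi(S)$ whose support meets $U_p$ in at least two indices lying in a \emph{non-exceptional} configuration of $T_p$-cosets. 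Such an $I$ lifts to a $T_n$-weighted zero-sum subsequence of $S$: since the weights modulo $p$ and modulo $n'$ are independent, it suffices that $\{\phi(x_i):i\in I\cap U_p\}$ have a $T_p$-weighted zero-sum using \emph{all} of $I\cap U_p$, which is automatic when $|I\cap U_p|\ge 2$ outside finitely many coset configurations that do not occur precisely because $7,13\nmid n$ (here one invokes the known fine structure of $T_p$-weighted zero-sums over $\mathbb{Z}_p$ from \cite{ar,subha}), which fails only when $|I\cap U_p|=1$, and for which the case $I\cap U_p=\emptyset$ would force a $T_{n'}$-weighted zero-sum in $R_p$, again excluded. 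Locating such an $I$ is the delicate part and splits by cases: when $p\mid n_1$ one has $D_{T_p}(\mathbb{Z}_p)=3$ and $\psi(S)$ has length $\ell>D_{T_{n'}}(\mathbb{Z}_{n'})$, so $\psi(S)$ has an abundance of $T_{n'}$-weighted zero-sum subsequences and one must track how the three cosets of $T_p$ distribute among the units and among the elements produced by partial $T_p$-weighted sums; when $p\mid n_2$ one has $T_p=\mathbb{Z}_p^*$, so every pair of units of $\mathbb{Z}_p$ is already a $T_p$-weighted zero-sum and the weights — including their residues modulo $n'$ — may be chosen essentially freely, and the argument exploits this freedom together with the surplus of units over $D_{T_p}(\mathbb{Z}_p)-1$ (a naive greedy extraction of $T_p$-weighted zero-sums from $U_p$ being too lossy, since each has length at least $2$, the bookkeeping must be done inside $\mathbb{Z}_{n'}$). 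Granting this, the theorem follows: the prime $p$ produced puts $S$, up to $T_n$-equivalence, into Case 1 or Case 2, with $R_p$ a $T_{n'}$-extremal sequence and (in Case 1) the image of $x_1\cdot x_2$ in $\mathbb{Z}_p$ a $T_p$-extremal sequence, and iterating the statement on $R_p$ unravels $S$ down to the prime factors of $n$.
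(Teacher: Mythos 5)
Your set-up and your treatment of the case where some prime $p$ has exactly $D_{T_p}(\mathbb{Z}_p)-1$ unit terms are correct and essentially coincide with the paper's argument (your lower bound $k_p\ge D_{T_p}(\mathbb{Z}_p)-1$ is Lemma \ref{lem}/Corollary \ref{exact}, and your derivation of the structure of $R_p$ and of $\overline{S}$ matches Cases 1 and 2 of the paper's proof). The problem is that the crux of the theorem --- producing such a prime at all --- is exactly the part you leave as a plan rather than a proof, and the plan as described does not work. First, it is unexecuted: you appeal to ``the known fine structure of $T_p$-weighted zero-sums'', to unspecified ``exceptional coset configurations'', and you explicitly say the key construction of the index set $I$ is ``the delicate part'', concluding only ``granting this''. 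Second, the statement you aim to contradict is too strong: you propose to derive a contradiction from $k_p\ge D_{T_p}(\mathbb{Z}_p)$ for a \emph{single} prime $p$, but a $T_n$-extremal sequence can perfectly well have a surplus of units modulo one prime. For instance, with $n=pq$, $p,q\equiv 1\pmod 3$, the extremal sequence $x_1\cdot x_2\cdot py_3\cdot py_4$ of Case 1 may have $x_1,x_2$ coprime to $q$ as well, giving $k_q=4>2$. What is actually impossible is that the surplus occurs at \emph{every} prime simultaneously, and ruling that out requires an argument that works across all primes at once.

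That global argument is short and you have all the ingredients in front of you: Lemma \ref{three} (for $p\equiv 1\pmod 3$, $p\ne 7,13$) and Lemma \ref{two} (for $p\equiv 2\pmod 3$, where $T_p=\mathbb{Z}_p^*$) each produce weights for \emph{all} $\ell$ terms of $S$ summing to zero modulo the given prime, provided $S$ has at least three (respectively two) units modulo that prime. So if $k_p\ge 3$ for every $p\mid n_1$ and $k_q\ge 2$ for every $q\mid n_2$, one obtains such a full system of weights modulo each prime divisor of $n$, and the Chinese Remainder Theorem (Observation \ref{obs}, using that $T_n$ surjects onto each $T_p$ with independently prescribable residues, as you yourself note) glues them into weights in $T_n$ exhibiting the \emph{entire} sequence $S$ as a $T_n$-weighted zero-sum sequence --- contradicting extremality. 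This replaces your search for a common subsequence $I$ meeting $U_p$ in a controlled way: no subsequence selection is needed because the lemmas already handle all terms, the non-units being absorbed for free modulo their prime. With this one step filled in, the rest of your write-up completes the proof along the same lines as the paper.
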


%%
%%For a positive integer $n$, we denote the set of all cubes in $(\mathbb{Z}/n\mathbb{Z})^*$ by $C_n = \{a^3 \pmod n \;|\; a \in (\mathbb{Z}/n\mathbb{Z})^*\}$. We prove the following theorems.
%
%\begin{theorem}\label{2mto}
%Let $p$ be an odd prime and $T_p$ the set of all cubic residues modulo $p$. Then we have
%\begin{itemize}
%\item[(i)] $D_{T_p}(\mathbb{Z}/p\mathbb{Z}) \leq 4$, and
%\smallskip
%\item[(ii)] $E_{T_p}(\mathbb{Z}/p\mathbb{Z}) \leq p+3$.
%\end{itemize}
%\end{theorem}
%
%\begin{theorem}\label{2mt1}
%Let $n=n_1n_2$ be an odd integer such that $n_1 =\prod_{i =1}^r p_i^{e_i}$ and $n_2=\prod_{j =1}^s q_j^{f_j}$  with primes $p_i \equiv 1 \pmod 3$ and $q_j \equiv 2 \pmod 3$ and $7 \nmid n$. Then we have
%\begin{itemize}
%\item[(i)] $D_{T_n}(\mathbb{Z}/n\mathbb{Z}) \leq 3\Omega(n_1)+ \Omega(n_2)+1$, and 
%\smallskip
%\item[(ii)] $E_{T_n}(\mathbb{Z}/n\mathbb{Z}) \leq n+3\Omega(n_1)+ \Omega(n_2)$.
%\end{itemize}
%\end{theorem}
%
%
%More generally, when the prime 7 is involved, we have the following theorem.
%

\section{Preliminaries}
%In this section we state some basic definitions and some preliminary lemmas.

Let $p$ be an odd prime number such that $p \equiv 2 \pmod 3$. Then every element of $\mathbb{Z}_{p^r}^*$ is a cubic residue modulo $p^r$. Thus in this case, $T_{p^r} = \mathbb{Z}_{p^r}^*$. 

Let $p$ be a prime number such that $p \equiv 1 \pmod 3$ and let $g$ be a generator of the cyclic group $\mathbb{Z}_{p^r}^*$. Then $T_{p^r} = \{ g^3,g^{2.3},\ldots, g^{\frac{p^r-p^{r-1}}{3}.3}=1\}$ forms a subgroup of $\mathbb{Z}_{p^r}^*$ of order $\frac{p^r-p^{r-1}}{3}$.

\medspace 

The following lemma is a consequence of the Chinese Remainder Theorem.

\medspace

\begin{lemma} \cite{ir} \label{crt}
Suppose that $m = 2^e p_1^{e_1}\ldots p_{\ell}^{e_{\ell}}$. Then $x^n\equiv a \pmod m$ is solvable if and only if the system of congruences $x^n\equiv a \pmod {2^e}$, $x^n\equiv a \pmod {p_1^{e_1}}$, \ldots, $x^n\equiv a \pmod {p_{\ell}^{e_{\ell}}}$ is solvable.
\end{lemma}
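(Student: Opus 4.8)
The plan is to deduce the lemma directly from the ring-theoretic form of the Chinese Remainder Theorem, which I would invoke as the single tool underlying both implications. Since $m = 2^e p_1^{e_1} \cdots p_\ell^{e_\ell}$ is the prime factorization of $m$, the factors $2^e, p_1^{e_1}, \ldots, p_\ell^{e_\ell}$ are pairwise coprime and multiply to $m$, so the componentwise reduction map
$$\phi : \mathbb{Z}_m \longrightarrow \mathbb{Z}_{2^e} \times \mathbb{Z}_{p_1^{e_1}} \times \cdots \times \mathbb{Z}_{p_\ell^{e_\ell}}, \qquad x \longmapsto (x \bmod 2^e,\, x \bmod p_1^{e_1},\, \ldots,\, x \bmod p_\ell^{e_\ell}),$$
is a ring isomorphism.

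For the forward implication I would argue that if $x_0^n \equiv a \pmod m$, then reducing this one congruence modulo each of the divisors $2^e$ and $p_i^{e_i}$ of $m$ shows immediately that the same $x_0$ satisfies every congruence of the system; this direction needs no appeal to CRT.

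For the reverse implication I would use that $\phi$, being a ring homomorphism, commutes with the $n$-th power map, i.e. $\phi(x^n) = \phi(x)^n$ for all $x$, where the power on the right is taken componentwise in the product ring. Assuming the system is solvable, say $x_0^n \equiv a \pmod{2^e}$ and $x_i^n \equiv a \pmod{p_i^{e_i}}$ for $1 \le i \le \ell$, surjectivity of $\phi$ yields an $x \in \mathbb{Z}_m$ with $\phi(x) = (x_0, x_1, \ldots, x_\ell)$, each entry read modulo its prime power. Then
$$\phi(x^n) = \phi(x)^n = (x_0^n,\, x_1^n,\, \ldots,\, x_\ell^n) = (a,\, a,\, \ldots,\, a) = \phi(a),$$
and injectivity of $\phi$ forces $x^n \equiv a \pmod m$.

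There is no genuine obstacle in this argument; the only point requiring care is the justification that the local congruences may be solved independently and then glued into a single solution modulo $m$, which is precisely the surjectivity of $\phi$, while the fact that such a glued $x$ actually satisfies $x^n \equiv a \pmod m$ is the injectivity together with the compatibility of $\phi$ with raising to the $n$-th power. One could alternatively avoid the language of isomorphisms and construct $x$ explicitly via the standard CRT idempotents, but the homomorphism formulation makes the interaction with the map $x \mapsto x^n$ transparent and removes all computation.
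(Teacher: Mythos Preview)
Your proof is correct and matches the paper's approach: the paper does not give a proof at all but simply cites Ireland--Rosen and remarks that the lemma is a consequence of the Chinese Remainder Theorem, which is exactly the single tool you invoke. Your write-up merely makes explicit the ring-isomorphism formulation that the citation leaves implicit.
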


Let $n$ be a positive integer. We denote $p^r \mid \mid n$ to mean that $p^r \mid n$ but $p^{r+1} \nmid n$. Let $S$ be a sequence over $\mathbb{Z}_n$ and $p$ a prime such that $p^r \mid \mid n$. We denote $S^{(p)}$ to be the image of the sequence $S$ under the natural map $\mathbb{Z}_n \rightarrow \mathbb{Z}_{p^r}$. Then we have the following trivial observation from Chinese Remainder Theorem and Lemma \ref{crt}.

\begin{observation}\label{obs}
Let $n$ be a positive integer and $S$ a sequence over $\mathbb{Z}_n$. Then $S$ is an $T_n$-weighted zero-sum sequence if and only if $S^{(p)}$ is an $T_{p^r}$-weighted zero-sum sequence, for every prime divisor $p$ of $n$ such that $p^r \mid \mid n$.	
\end{observation}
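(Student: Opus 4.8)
The plan is to prove both implications directly, using the Chinese Remainder Theorem to transport weights between $\mathbb{Z}_n$ and its components $\mathbb{Z}_{p^r}$, and to invoke Lemma \ref{crt} at the single step where it is genuinely needed. Write $S = g_1 \cdot g_2 \cdot \ldots \cdot g_k$, and for a prime $p$ with $p^r \mid \mid n$ let $\pi_p \colon \mathbb{Z}_n \to \mathbb{Z}_{p^r}$ be the natural reduction map. The key preliminary remark is that $\pi_p$ is a ring homomorphism carrying $\mathbb{Z}_n^*$ into $\mathbb{Z}_{p^r}^*$; in particular it sends a cube of a unit to a cube of a unit, so $\pi_p(T_n) \subseteq T_{p^r}$.

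For the forward implication, I would suppose $S$ is $T_n$-weighted zero-sum, so that there are $a_1, \ldots, a_k \in T_n$ with $\sum_{i=1}^k a_i g_i = 0$ in $\mathbb{Z}_n$. Applying $\pi_p$ and using that it is a homomorphism gives $\sum_{i=1}^k \pi_p(a_i)\, \pi_p(g_i) = 0$ in $\mathbb{Z}_{p^r}$; since $\pi_p(a_i) \in T_{p^r}$ by the remark above, this exhibits $S^{(p)}$ as a $T_{p^r}$-weighted zero-sum sequence for each prime $p$. This direction is pure bookkeeping and requires no input beyond the homomorphism property.

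The reverse implication is where Lemma \ref{crt} enters. I would assume that for each prime $p$ with $p^r \mid \mid n$ there exist weights $b_i^{(p)} \in T_{p^r}$ with $\sum_i b_i^{(p)} g_i^{(p)} = 0$ in $\mathbb{Z}_{p^r}$. For each fixed $i$, use the Chinese Remainder Theorem to define $a_i \in \mathbb{Z}_n$ as the unique residue satisfying $a_i \equiv b_i^{(p)} \pmod{p^r}$ for every such $p$. Then $\sum_i a_i g_i \equiv 0 \pmod{p^r}$ for every $p$, whence $\sum_i a_i g_i = 0$ in $\mathbb{Z}_n$. It remains to verify $a_i \in T_n$. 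Since each $b_i^{(p)}$ is a cube of a unit, it is in particular a unit mod $p^r$, so $a_i$ is a unit modulo every prime power dividing $n$ and therefore $a_i \in \mathbb{Z}_n^*$. Moreover $b_i^{(p)}$ being a cube mod $p^r$ means that $x^3 \equiv a_i \pmod{p^r}$ is solvable for each $p$, so Lemma \ref{crt} guarantees that $x^3 \equiv a_i \pmod n$ is solvable, say by $e_i$. As $a_i = e_i^3$ is a unit, $e_i$ is itself a unit, and hence $a_i \in T_n$. Thus $\sum_i a_i g_i = 0$ realizes $S$ as $T_n$-weighted zero-sum.

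The only genuinely nontrivial point, and the step I expect to require the most care to phrase correctly, is the passage from cube-weights at each prime to a single cube-weight in $\mathbb{Z}_n$: the Chinese Remainder Theorem by itself only produces a unit $a_i$ whose reduction modulo each $p^r$ happens to be a cube, and being a cube \emph{locally} at every prime power does not formally amount to being a cube \emph{globally} without Lemma \ref{crt}, which supplies exactly this equivalence. Everything else in the argument is routine manipulation with the reduction homomorphisms and with the Chinese Remainder Theorem.
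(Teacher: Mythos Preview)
Your proposal is correct and follows exactly the route the paper indicates: the paper states this as a ``trivial observation from Chinese Remainder Theorem and Lemma~\ref{crt}'' without writing out details, and your argument unpacks precisely that---reduction maps send $T_n$ into $T_{p^r}$ for the forward direction, and CRT together with Lemma~\ref{crt} lift local cube-weights to a global one for the converse.
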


We now state some lemmas which are useful to prove our main theorems.

\begin{lemma} \label{prime} \cite{shameek}
Let $p$ be an odd prime such that $p \equiv 1 \pmod 3$. Then $D_{T_p}(\mathbb{Z}_p)=3$.
\end{lemma}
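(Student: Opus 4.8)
The plan is to establish the two inequalities $D_{T_p}(\mathbb{Z}_p) \le 3$ and $D_{T_p}(\mathbb{Z}_p) \ge 3$ separately. For the lower bound, I would exhibit a sequence of length $2$ over $\mathbb{Z}_p$ with no non-empty $T_p$-weighted zero-sum subsequence. Since $p \equiv 1 \pmod 3$, the set $T_p$ is the subgroup of cubic residues in $\mathbb{Z}_p^*$, of index $3$, so $0 \notin T_p$ and $T_p$ does not meet at least two of the three cosets of $T_p$ in $\mathbb{Z}_p^*$. The natural candidate is $S = 1 \cdot g$ where $g$ is a generator of $\mathbb{Z}_p^*$ (so $g \notin T_p$). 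The singleton subsequences $1$ and $g$ give sums $a_1 \cdot 1$ and $a_2 \cdot g$ with $a_i \in T_p$, which are never $0$ since $T_p \subseteq \mathbb{Z}_p^*$; and the full sequence gives $a_1 + a_2 g$, which equals $0$ iff $-g^{-1} = a_1 a_2^{-1} \in T_p$, i.e. iff $-g^{-1} \in T_p$. So one must choose the second term more carefully: pick $x_2$ so that neither $x_2 x_1^{-1}$ nor $-x_2 x_1^{-1}$ lies in $T_p$. Equivalently, after scaling so $x_1 = 1$, pick $x_2 = t$ with $t \notin T_p$ and $-t \notin T_p$. Since the three cosets of $T_p$ partition $\mathbb{Z}_p^*$ and multiplication by $-1$ permutes them, I need to check that some coset $C \ne T_p$ satisfies $-C \ne T_p$ as well; this holds precisely when $-1 \in T_p$ (then $-C = C$ for every coset, and any $C \ne T_p$ works) or, if $-1 \notin T_p$, one verifies directly among the three cosets that a suitable one exists. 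This is the only place where the hypothesis $p \equiv 1 \pmod 3$ together with elementary facts about cubic residues is used, and it is a short finite check.

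For the upper bound $D_{T_p}(\mathbb{Z}_p) \le 3$, I would take an arbitrary sequence $S = x_1 \cdot x_2 \cdot x_3$ over $\mathbb{Z}_p$ of length $3$ and show it has a non-empty $T_p$-weighted zero-sum subsequence. If any $x_i = 0$, that singleton works (since $0 \in T_p \cdot 0$ trivially — more precisely $a \cdot 0 = 0$ for any $a \in T_p$). So assume all $x_i \in \mathbb{Z}_p^*$. Scaling by a unit (which preserves the property of having a $T_p$-weighted zero-sum subsequence, since $T_p$ is a group), I may assume $x_1 = 1$. Now consider the two terms $x_1 = 1$ and $x_2$: the subsequence $x_1 \cdot x_2$ is $T_p$-weighted zero-sum iff $-x_2^{-1} \in T_p$, i.e. iff $x_2$ lies in the coset $(-1)^{-1} T_p = -T_p$ (using $T_p$ is a group). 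If this fails for the pair $(x_1,x_2)$, and likewise for $(x_1,x_3)$ and $(x_2,x_3)$, then $x_i x_j^{-1} \notin -T_p$ for all pairs. The key counting step: each $x_i$ lies in one of the three cosets of $T_p$, and the pairwise-ratio conditions force the three elements to occupy a restricted configuration of cosets; then the full sum $a_1 x_1 + a_2 x_2 + a_3 x_3$ can be made zero by choosing the $a_i$ appropriately, because as $a_i$ ranges over $T_p$, the term $a_i x_i$ ranges over the whole coset $x_i T_p$, and one checks that the three cosets in play — whatever they are — admit a choice of one representative from each summing to $0$. Concretely: with three cosets $C_0 = T_p, C_1, C_2$ partitioning $\mathbb{Z}_p^*$, if the $x_i$ lie in cosets $C_{i_1}, C_{i_2}, C_{i_3}$, I want elements $y_k \in C_{i_k}$ with $y_1 + y_2 + y_3 = 0$; this is a statement about whether $0$ is a sum of three cubic residues-times-fixed-coset-reps, which follows from a cubic-Gauss-sum / Weil-bound count that such representations exist whenever $p$ is large enough (and the small cases are handled by the exclusion of $p = 7, 13$ — wait, actually this lemma as stated has no such exclusion, so this must instead be a completely elementary argument).

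Reconsidering, the cleanest route for the upper bound avoids heavy counting: I would argue that among $x_1, x_2, x_3 \in \mathbb{Z}_p^*$ (after scaling $x_1 = 1$), if no two-element subsequence is $T_p$-weighted zero-sum then, as noted, $x_2, x_3 \notin -T_p$ and $x_2 x_3^{-1} \notin -T_p$. Since there are only three cosets, $x_2$ and $x_3$ lie in the two cosets other than $-T_p$; if $x_2$ and $x_3$ lie in the same coset then $x_2 x_3^{-1} \in T_p$, and then the pair $x_2 \cdot x_3$ is $T_p$-weighted zero-sum iff $-1 \in T_p$... I would split on whether $-1 \in T_p$. If $-1 \in T_p$, then $-T_p = T_p$, so "no two-element subsequence works" forces all pairwise ratios to avoid $T_p$, i.e. all three $x_i$ lie in distinct cosets; then for the full sequence I need $y_1 + y_2 + y_3 = 0$ with one $y_k$ from each coset — and since the three cosets are permuted transitively by multiplication by $g$, this reduces to: is there a representation $1 + g\,t_2 + g^2\,t_3 = 0$ with $t_2, t_3 \in T_p$? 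Equivalently, after dividing, a representation of a fixed element as a difference of two cubes, which is the classical fact that every element of $\mathbb{Z}_p$ is a sum of two cubic residues (times units) for $p \equiv 1 \pmod 3$ — this is exactly the content needed and is where I expect the main obstacle to lie, since it requires either a direct Cauchy–Davenport-type argument on cosets or a Weil-bound estimate, and the truly elementary version for all such $p$ is delicate. If $-1 \notin T_p$, a parallel but shorter case analysis on the at most three possible coset-configurations of $(x_1,x_2,x_3)$ finishes the argument, each configuration yielding an explicit weighted zero-sum of a two- or three-element subsequence. I expect the case $-1 \in T_p$ (equivalently $p \equiv 1 \pmod{12}$ roughly) to be the hard part, and I would cite or reprove the needed statement that every nonzero residue mod $p$ is expressible as $c_1 + c_2$ with $c_1, c_2$ cubic residues when $p \equiv 1 \pmod 3$.
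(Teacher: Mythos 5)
First, a remark on the comparison itself: the paper does not prove this lemma --- it is imported as a black box from the reference \cite{shameek} --- so there is no in-paper argument to match yours against, and your proposal has to stand on its own. Your skeleton (lower bound via an explicit length-$2$ sequence, upper bound via a coset analysis of three units) is the natural one, and the lower bound is correct. It can also be streamlined: since $-1=(-1)^3$, one always has $-1\in T_p$, hence $-T_p=T_p$; a pair of units $x_i\cdot x_j$ admits a $T_p$-weighted zero-sum if and only if $x_ix_j^{-1}\in T_p$, so the sequence $1\cdot g$ with $g$ a non-cube already has no non-empty $T_p$-weighted zero-sum subsequence, and your case split on whether $-1\in T_p$ is vacuous.

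The genuine gap is in the upper bound, exactly where you suspected it would be. Because $-T_p=T_p$, the hypothesis ``no pair works'' forces the three units into the three distinct cosets $T_p$, $gT_p$, $g^2T_p$, and what you must then prove is $0\in T_p+gT_p+g^2T_p$, equivalently $1\in gT_p+g^2T_p$. The fact you propose to cite or reprove to close this --- that every nonzero residue mod $p$ is a sum of two cubic residues --- is false: for $p=7$ the nonzero cubes are $\{1,6\}$ and $3$ is not a sum of two of them; for $p=13$ they are $\{1,5,8,12\}$ and $1$ is not a sum of two of them. Nor can you exclude $7$ and $13$ here: Proposition \ref{lowerbound2} applies this lemma to every prime divisor congruent to $1 \pmod 3$ with no exclusion, and indeed $D_{T_7}(\mathbb{Z}_7)=D_{T_{13}}(\mathbb{Z}_{13})=3$. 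The correct, weaker statement $1\in gT_p+g^2T_p$ does hold for all $p\equiv 1\pmod 3$, but establishing it requires a Jacobi-sum/Weil-bound count of points on $gx^3+g^2y^3=1$ for large $p$ together with direct verification for the small primes (e.g.\ $1+4+2\equiv 0 \pmod 7$ with $4\in 3T_7$ and $2\in 3^2T_7$; $1+3+9\equiv 0\pmod{13}$ with $3\in 2T_{13}$ and $9\in 4T_{13}$); a plain Cauchy--Davenport estimate on the cosets is not quite strong enough. As written, your argument never supplies this ingredient, so the inequality $D_{T_p}(\mathbb{Z}_p)\le 3$ is not established.
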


\medspace

\begin{lemma}\label{three} \cite{shameek}
	Let $p$ be an odd prime such that $p \neq 7, 13$. Let $S = x_1\cdot x_2\cdot \ldots \cdot x_m$ be a sequence  over $\mathbb{Z}_p$ such that at least three terms of $S$ are units in $\mathbb{Z}_p$. Then there exist $a_1,a_2, \ldots,a_m \in T_p$ such that $\sum_{i=1}^m a_ix_i \equiv 0 \pmod p$.
\end{lemma}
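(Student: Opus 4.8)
The plan is to recast the assertion as a statement about Minkowski sumsets of cosets of the subgroup $T_p\le\mathbb{Z}_p^*$ inside the additive group $\mathbb{Z}_p$, and then to treat the ``long'' case with the Cauchy--Davenport theorem and the ``short'' case with the classical cyclotomic numbers of order $3$. First I would make a routine reduction: a term $x_i=0$ may be assigned an arbitrary weight in $T_p$ without affecting $\sum_i a_ix_i$, so we may assume that every $x_i$ is a unit; write $m\ (\ge 3)$ for the number of (unit) terms. Since $a_ix_i$ ranges over the coset $x_iT_p$ as $a_i$ ranges over $T_p$, it suffices to prove $0\in x_1T_p+\cdots+x_mT_p$. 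If $p\not\equiv 1\pmod 3$, then $T_p=\mathbb{Z}_p^*$ by the remarks opening Section~2, and one only needs $0$ to be a sum of $m\ge 3$ nonzero elements of $\mathbb{Z}_p$, which is clear for any odd $p$. So from now on $p\equiv 1\pmod 3$, hence $p\ge 19$; then $T_p$ has index $3$ with cosets $C_0=T_p,C_1,C_2$ of size $f=(p-1)/3$, and $f$ is even (so that $-1\in T_p$).

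If $m\ge 4$, Cauchy--Davenport gives $|x_1T_p+x_2T_p|\ge\min\{p,2f-1\}=2f-1$, and likewise for $x_3T_p+x_4T_p$; since $p\ge 19>10$ these two cardinalities add up to more than $p$, so $(x_1T_p+x_2T_p)+(x_3T_p+x_4T_p)=\mathbb{Z}_p$, and adjoining the remaining cosets still leaves $\mathbb{Z}_p$, which contains $0$. The substantive case is $m=3$. Multiplying $x_1,x_2,x_3$ by $x_1^{-1}$ permutes $C_0,C_1,C_2$ and does not affect whether $0$ lies in the sumset, so I may assume $x_1\in C_0$; the goal becomes $0\in C_0+C_a+C_b$ for the cosets $C_a\ni x_2x_1^{-1}$ and $C_b\ni x_3x_1^{-1}$, which may be arbitrary. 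Dividing a would-be relation $x+y+z=0$ (with $x\in C_0$, $y\in C_a$, $z\in C_b$) by $-x$ and using $-1\in T_p$, the number of such relations equals the number of pairs $(u,v)$ with $u\in C_a$, $v\in C_b$, and $u+v=-1$, which is exactly the cyclotomic number $(a,b)_3$ of order $3$. Hence $0\in C_0+C_a+C_b$ if and only if $(a,b)_3>0$.

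The final ingredient is the classical evaluation of the order-$3$ cyclotomic numbers: writing $4p=L^2+27M^2$ with $L\equiv 1\pmod 3$, one has $9(0,0)_3=p-8+L$, $9(1,2)_3=p+1+L$, $18(0,1)_3=2p-4-L-9M$, $18(0,2)_3=2p-4-L+9M$, together with the identities $(0,1)_3=(1,0)_3=(2,2)_3$, $(0,2)_3=(2,0)_3=(1,1)_3$, $(1,2)_3=(2,1)_3$ valid since $f$ is even. From these one checks that every order-$3$ cyclotomic number is positive for each prime $p\equiv 1\pmod 3$ with $p\notin\{7,13\}$: indeed $(1,2)_3\ge(\sqrt p-1)^2/9>0$ always; $(0,0)_3=0$ forces $L=8-p$, whence $27M^2=-p^2+20p-64>0$, i.e.\ $4<p<16$, so $p\in\{7,13\}$; and neither $(0,1)_3$ nor $(0,2)_3$ can vanish unless $p=7$ (substituting for $L$ in $4p=L^2+27M^2$ forces $p=7$). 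This settles the case $m=3$. (As a cross-check, the subcase in which $x_1,x_2,x_3$ lie in three distinct cosets of $T_p$ also follows directly from Lemma~\ref{prime}: those three unit terms form a length-$3$ sequence, which has a non-empty $T_p$-weighted zero-sum subsequence; this subsequence cannot have length $1$, nor length $2$ — the latter would place two of the $x_i$ in one coset since $-1\in T_p$ — so it has length $3$.)

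I expect the crux to be precisely this last point: establishing positivity of the relevant order-$3$ cyclotomic numbers and pinning down $\{7,13\}$ as the exact exceptional set. That is also exactly where the hypothesis $p\ne 7,13$ is needed, for the lemma genuinely fails otherwise: over $\mathbb{Z}_7$ and over $\mathbb{Z}_{13}$ the sequence $1\cdot 1\cdot 1$ has no $T_p$-weighted zero-sum, and even $1\cdot 3\cdot 3\cdot 3$ has none over $\mathbb{Z}_7$. If one prefers to sidestep cyclotomy, the same positivity can instead be read from a Jacobi-sum estimate — the number of representations in question is $\tfrac{p-2}{9}$ up to an error $O(\sqrt p)$, hence positive once $p$ is large — leaving only finitely many small primes to be checked by hand.
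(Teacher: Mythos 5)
This lemma is not proved in the paper at all: it is imported as a black box from the reference \cite{shameek}, so there is no internal proof to compare against. Judged on its own, your argument is correct and self-contained. The reduction to all-unit terms is legitimate (a zero term absorbs any weight), the case $p\not\equiv 1\pmod 3$ is trivial since then $T_p=\mathbb{Z}_p^*$, the case $m\ge 4$ follows from Cauchy--Davenport exactly as you say (each double sumset has size at least $2f-1=(2p-5)/3$, and $2(2f-1)>p$ once $p>10$), and the case $m=3$ correctly reduces to positivity of the order-$3$ cyclotomic numbers; your identification of the exceptional set $\{7,13\}$ as precisely the primes with $(0,0)_3=0$ (via $L=8-p$ forcing $4<p<16$) is the right mechanism and matches the explicit counterexample $1\cdot 1\cdot 1$ over $\mathbb{Z}_7$ and $\mathbb{Z}_{13}$. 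Two cosmetic points: dividing $x+y+z=0$ by $-x$ actually yields $u+v=1$ rather than $u+v=-1$, but since $-1\in T_p$ (as $f$ is even) and $(a,b)_3=(b,a)_3$ in this situation, the count is the same cyclotomic number either way; and you are quoting the classical evaluations $9(0,0)_3=p-8+L$, $9(1,2)_3=p+1+L$, $18(0,1)_3=2p-4-L-9M$, $18(0,2)_3=2p-4-L+9M$ without proof, which is acceptable provided you cite a standard source (e.g.\ Gauss's evaluation as presented in Ireland--Rosen or Berndt--Evans--Williams), in the same way the paper itself cites \cite{shameek}. Your approach has the advantage of making visible exactly why $7$ and $13$ are excluded, which the paper's citation hides.
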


\medspace

\begin{lemma}\label{two} \cite{gr}
	Let $p^r$ be an odd prime power and $S = x_1 \cdot x_2\cdot \ldots \cdot x_m$ a sequence over $\mathbb{Z}_{p^r}$ such that at least two elements of $S$ are units in $\mathbb{Z}_{p^r}$. Then there exist $a_1,a_2,\ldots, a_m \in \mathbb{Z}_{p^r}^*$ such that $\sum_{i=1}^m a_ix_i \equiv 0 \pmod {p^r}$.
\end{lemma}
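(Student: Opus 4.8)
The plan is to reduce the assertion to a single two-term weighted sum and then to the elementary fact that, over an odd prime power, every residue class is a sum of two units. Relabel so that $x_1$ and $x_2$ are the two terms guaranteed to be units. For the remaining terms $x_3, \ldots, x_m$, about which nothing is assumed, I would simply set $a_3 = \cdots = a_m = 1 \in \mathbb{Z}_{p^r}^*$ and write $c = \sum_{i=3}^m x_i$. It then suffices to produce $a_1, a_2 \in \mathbb{Z}_{p^r}^*$ with $a_1 x_1 + a_2 x_2 \equiv -c \pmod{p^r}$, since the contribution of the non-unit terms has been absorbed into the target.

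Next I would exploit that $x_1$ and $x_2$ are units: as $a_1$ ranges over $\mathbb{Z}_{p^r}^*$ the product $a_1 x_1$ ranges over all of $\mathbb{Z}_{p^r}^*$, and likewise $a_2 x_2$ ranges over $\mathbb{Z}_{p^r}^*$. Hence the set of attainable values $a_1 x_1 + a_2 x_2$ is precisely the set of sums $u + v$ with $u, v \in \mathbb{Z}_{p^r}^*$. The lemma therefore follows once one shows that every element $t \in \mathbb{Z}_{p^r}$ (in particular $t = -c$) is a sum of two units.

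For that final step, recall that $u \in \mathbb{Z}_{p^r}$ is a unit precisely when $p \nmid u$. Given $t$, I would look for a unit $u$ such that $v := t - u$ is also a unit; this requires $u \not\equiv 0 \pmod p$ and $u \not\equiv t \pmod p$. We are thus forbidding at most two residue classes modulo $p$ for $u$, and since $p$ is odd we have $p \geq 3$, so at least one admissible class modulo $p$ survives (two classes if $t \equiv 0 \pmod p$, since the forbidden classes coincide). Any representative $u$ of an admissible class is a unit and yields a unit $v = t - u$ with $u + v = t$. This is exactly where oddness enters: for $p = 2$ the forbidden classes can exhaust all residues and the statement genuinely fails, e.g. $1 \in \mathbb{Z}_2$ is not a sum of two units.

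The argument is essentially free of obstacles; the only point requiring care is the counting modulo $p$ in the last step, together with the observation that the hypothesis that $p$ is odd is precisely what guarantees an unforbidden residue class remains. Everything else is bookkeeping: the freedom in the non-unit terms is irrelevant because their total $c$ is simply folded into the target, while the unit hypothesis on $x_1, x_2$ turns the two chosen weights into free unit parameters spanning all pairs of units.
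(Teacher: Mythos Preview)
Your argument is correct. The reduction to the claim that every element of $\mathbb{Z}_{p^r}$ is a sum of two units is sound, and your counting-modulo-$p$ proof of that claim is the standard one; the oddness hypothesis is used exactly where you say.

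There is nothing to compare against in the paper itself: the lemma is stated with a citation to Griffiths \cite{gr} and no proof is given here. Your self-contained argument is in fact the natural elementary proof, and it is essentially how one would expect the result to be established in the cited source as well.
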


We also use the following remark which was proved in \cite{CM}.

\begin{remark}\label{2rmk2}
Let $m$ and $n$ be two positive integers such that $m$ divides $n$ and $\psi : \mathbb{Z}_n \rightarrow \mathbb{Z}_m$ be the natural ring homomorphism. Then $\mathbb{Z}_m^* = \psi(\mathbb{Z}_n^*)$ and hence $T_m = \psi(T_n)$.
\end{remark}

\section{Proof of Theorem \ref{mt1}}
First, we prove the following lemma for a general weight set $A$, which is useful to prove the lower bound.

\medskip

\begin{lemma} \label{lowerbound}
Let $n=n_1n_2$ and $A,A_1,A_2$ be subsets of $\mathbb{Z}_n,\mathbb{Z}_{n_1},\mathbb{Z}_{n_2}$ respectively such that the image of $A$ under the natural map $\mathbb{Z}_n \rightarrow \mathbb{Z}_{n_i}$ is contained in $A_i$ for $i=1,2$. Then $D_A(\mathbb{Z}_n) \geq D_{A_1}(\mathbb{Z}_{n_1})+D_{A_2}(\mathbb{Z}_{n_2})-1$.
\end{lemma}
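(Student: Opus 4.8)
The plan is to build an $A$-extremal sequence over $\mathbb{Z}_n$ by concatenating lifts of extremal sequences over $\mathbb{Z}_{n_1}$ and $\mathbb{Z}_{n_2}$, using the Chinese Remainder Theorem isomorphism $\mathbb{Z}_n \cong \mathbb{Z}_{n_1} \times \mathbb{Z}_{n_2}$ (recall $n = n_1 n_2$ and we may assume $\gcd(n_1,n_2)=1$, since otherwise the statement is vacuous or one applies it to the relevant coprime factorization; in the application $n$ is squarefree so this is automatic). Let $\ell_1 = D_{A_1}(\mathbb{Z}_{n_1})-1$ and $\ell_2 = D_{A_2}(\mathbb{Z}_{n_2})-1$. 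Choose a sequence $U: u_1 \cdot \ldots \cdot u_{\ell_1}$ over $\mathbb{Z}_{n_1}$ with no non-empty $A_1$-weighted zero-sum subsequence, and a sequence $V: v_1 \cdot \ldots \cdot v_{\ell_2}$ over $\mathbb{Z}_{n_2}$ with no non-empty $A_2$-weighted zero-sum subsequence; these exist by definition of the Davenport constants. Now define a sequence $S$ over $\mathbb{Z}_n$ of length $\ell_1 + \ell_2$ whose first $\ell_1$ terms are the preimages under CRT of $(u_i, 0)$ and whose last $\ell_2$ terms are the preimages of $(0, v_j)$.

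The key claim is that $S$ has no non-empty $A$-weighted zero-sum subsequence; granting this, $D_A(\mathbb{Z}_n) \geq \ell_1 + \ell_2 + 1 = D_{A_1}(\mathbb{Z}_{n_1}) + D_{A_2}(\mathbb{Z}_{n_2}) - 1$, as desired. To prove the claim, suppose $T \mid S$ is non-empty and $\sum_{k} a_k t_k = 0$ in $\mathbb{Z}_n$ with all $a_k \in A$. Project this identity to $\mathbb{Z}_{n_1}$: the terms of $S$ coming from $V$ map to $0$, so we get $\sum a_k^{(1)} u_{i_k} = 0$ in $\mathbb{Z}_{n_1}$, where $a_k^{(1)}$ is the image of $a_k$ under $\mathbb{Z}_n \to \mathbb{Z}_{n_1}$, which lies in $A_1$ by hypothesis. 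If $T$ used any term from the $U$-block, this exhibits a non-empty $A_1$-weighted zero-sum subsequence of $U$, a contradiction. Hence $T$ uses only terms from the $V$-block; projecting to $\mathbb{Z}_{n_2}$ and arguing identically with $A_2$ gives a contradiction there. So $T$ must be empty, proving the claim.

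The only genuinely delicate point is the bookkeeping of which indices of $A$-weights descend to which $A_i$, and the fact that the CRT preimages of $(u_i,0)$ and $(0,v_j)$ behave as described under the two coordinate projections — but this is exactly the content of the hypothesis that the image of $A$ in each $\mathbb{Z}_{n_i}$ lies in $A_i$, so no extra work is needed. I would also remark that no structural assumption on $A$, $A_1$, $A_2$ (such as being subgroups) is used, which is why the lemma is stated for general weight sets; in the intended application one takes $A = T_n$, $A_i = T_{n_i}$, and the hypothesis is supplied by Remark~\ref{2rmk2}.
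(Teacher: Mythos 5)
Your argument is correct and essentially identical to the paper's in the case $\gcd(n_1,n_2)=1$, but it does not prove the lemma as stated, and the reduction you offer to the coprime case does not work. The lemma is asserted for an \emph{arbitrary} factorization $n=n_1n_2$; when $\gcd(n_1,n_2)>1$ the statement is neither vacuous nor deducible from a coprime refactorization (take $n=p^2$, $n_1=n_2=p$: the inequality $D_A(\mathbb{Z}_{p^2})\ge D_{A_1}(\mathbb{Z}_p)+D_{A_2}(\mathbb{Z}_p)-1$ is a nontrivial claim with no coprime surrogate). The paper genuinely needs this case: Proposition~\ref{lowerbound2} is stated for $n_1=\prod_i p_i^{e_i}$ with arbitrary exponents, and the bound $D_{T_{p^e}}(\mathbb{Z}_{p^e})\ge 2e+1$ used there is obtained by iterating the present lemma on the non-coprime factorization $p^e=p\cdot p^{e-1}$. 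Your construction fails at the very first step in that setting: there is no isomorphism $\mathbb{Z}_n\cong\mathbb{Z}_{n_1}\times\mathbb{Z}_{n_2}$, and in general no element of $\mathbb{Z}_n$ reduces to $(u_i,0)$ under the two projections (for $n=4$, $n_1=n_2=2$ both projections are the same map, so nothing maps to $(1,0)$).

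The repair is a small asymmetric modification of your lifts, and it is exactly what the paper does. Keep the $V$-block as you have it: choose any $y_j\in\mathbb{Z}_n$ with $y_j\mapsto v_j$ under $\mathbb{Z}_n\to\mathbb{Z}_{n_2}$. But for the $U$-block, instead of a CRT preimage of $(u_i,0)$, set $x_i:=n_2u_i$ (an integer representative of $u_i$ multiplied by $n_2$). These $x_i$ are automatically $\equiv 0\pmod{n_2}$, so reducing a relation $\sum_k a_k x_{i_k}+\sum_k b_k y_{j_k}\equiv 0\pmod n$ modulo $n_2$ kills the $U$-block and produces an $A_2$-weighted zero-sum subsequence of $V$, nonempty whenever $T$ meets the $V$-block; and if $T$ lies entirely in the $U$-block, one divides $\sum_k a_k n_2 u_{i_k}\equiv 0\pmod{n_1n_2}$ by $n_2$ to get $\sum_k \bar a_k u_{i_k}\equiv 0\pmod{n_1}$ with $\bar a_k\in A_1$. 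No coprimality is used anywhere. In the coprime case this construction agrees with yours up to the choice of lift, so nothing is lost; your closing observations (no structural hypothesis on the weight sets, and the role of Remark~\ref{2rmk2} in the application) are correct.
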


\begin{proof}
Let $D_{A_1}(\mathbb{Z}_{n_1})=k$ and $D_{A_1}(\mathbb{Z}_{n_2})=\ell$. Then there exists a sequence $S_1^{'}=x_1^{'} \cdot x_2^{'} \cdot \ldots \cdot x_{k-1}^{'}$ over $\mathbb{Z}_{n_1}$ of length $k-1$ which does not have any non-empty $A_1$-weighted zero-sum subsequence. Similarly there exists a sequence $S_2^{'}=y_1^{'} \cdot y_2^{'} \cdot \ldots \cdot y_{\ell-1}^{'}$ over $\mathbb{Z}_{n_2}$ of length $\ell-1$ which does not have any non-empty $A_2$-weighted zero-sum subsequence.

For $1 \leq i \leq k-1$, let $x_i=n_2 x_i^{'}$ and  $S_1=x_1 \cdot x_2 \cdot \ldots \cdot x_{k-1}$. Similarly for $1 \leq i \leq \ell-1$, let $y_i \in \mathbb{Z}_n$ be such that $y_i \mapsto y_i^{'}$ under the natural ring homomorphism $\mathbb{Z}_n \rightarrow \mathbb{Z}_{n_2}$, and $S_2=y_1 \cdot y_2 \cdot \ldots \cdot y_{\ell-1}$.

Let us consider the sequence $S=x_1 \cdot x_2 \cdot \ldots \cdot x_{k-1}\cdot y_1 \cdot y_2 \cdot \ldots \cdot y_{\ell-1}$ over $\mathbb{Z}_n$ of length $k+\ell-2$. We show that $S$ does not have any non empty $A$-weighted zero-sum subsequence.

If possible let $S$ has a non empty $A$-weighted zero-sum subsequence, say $T$. If $T$ is a proper subsequence of $S_1$, i.e. if $T=x_{i_1}\cdot x_{i_2}\cdot \ldots \cdot x_{i_t}$ does not contain any term from $S_2$, then there exists $a_{i_1}, a_{i_2}, \ldots, a_{i_t} \in A$ such that $$a_{i_1}x_{i_1}+ a_{i_2}x_{i_2}+ \cdots+ a_{i_t}x_{i_t} \equiv 0 \pmod n.$$
As the image of $A$ under the natural map $\mathbb{Z}_n \rightarrow \mathbb{Z}_{n_1}$ is contained in $A_1$, by dividing the above congruence by $n_2$ we get an $A_1$-weighted zero-sum subsequence of $S_1^{'}$. This is not possible by our choice of $S_1^{'}$.

Thus we can assume that $T$ contains some term from $S_2$. By considering the $A$-weighted zero-sum combination of $T$, we get an $A$-weighted combination of some terms of $S_2$ which is divisible by $n_2$. As the image of $A$ under the natural map $\mathbb{Z}_n \rightarrow \mathbb{Z}_{n_2}$ is contained in $A_2$, we get that the sequence $S_2^{'}$ has an $A_2$-weighted zero-sum subsequence. A contradiction to our choice of $S_2^{'}$. 

Hence we see that $S$ does not have any non empty $A$-weighted zero-sum subsequence, and therefore $D_A(\mathbb{Z}_n) \geq k+\ell-1=D_{A_1}(\mathbb{Z}_{n_1})+D_{A_2}(\mathbb{Z}_{n_2})-1$.
\end{proof}

\medskip

\begin{proposition}\label{lowerbound2}
Let $n=n_1n_2$ be an odd integer such that $n_1 =\prod_{i =1}^r p_i^{e_i}$ and $n_2=\prod_{j =1}^s q_j^{f_j}$  with primes $p_i \equiv 1 \pmod 3$ and $q_j \equiv 2 \pmod 3$.   Then $D_{T_n}(\mathbb{Z}_n)\geq 2 \Omega(n_1)+\Omega(n_2)+1$, and $E_{T_n}(\mathbb{Z}_n) \geq n+2 \Omega(n_1)+\Omega(n_2)$.
\end{proposition}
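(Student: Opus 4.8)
The plan is to reduce the multi-prime lower bound to the prime-power case and then invoke Lemma~\ref{lowerbound} inductively. First I would observe that by Observation~\ref{obs} together with Remark~\ref{2rmk2}, the hypothesis of Lemma~\ref{lowerbound} is automatically satisfied when $A=T_n$ and $A_i=T_{n_i}$: the natural map $\mathbb{Z}_n\to\mathbb{Z}_{n_i}$ sends $T_n$ onto $T_{n_i}$, so in particular the image is contained in $T_{n_i}$. Hence Lemma~\ref{lowerbound} gives $D_{T_n}(\mathbb{Z}_n)\geq D_{T_{n_1}}(\mathbb{Z}_{n_1})+D_{T_{n_2}}(\mathbb{Z}_{n_2})-1$, and iterating this over the full prime-power factorization $n=\prod p_i^{e_i}\prod q_j^{f_j}$ yields
\begin{equation*}
D_{T_n}(\mathbb{Z}_n)\ \geq\ \sum_{i=1}^{r} D_{T_{p_i^{e_i}}}(\mathbb{Z}_{p_i^{e_i}}) + \sum_{j=1}^{s} D_{T_{q_j^{f_j}}}(\mathbb{Z}_{q_j^{f_j}}) - (r+s) + 1.
\end{equation*}

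Next I would supply the prime-power building blocks. For a prime power $q^f$ with $q\equiv 2\pmod 3$ we have $T_{q^f}=\mathbb{Z}_{q^f}^*$, and the standard fact that $D_{\mathbb{Z}_{q^f}^*}(\mathbb{Z}_{q^f})=f+1$ (equivalently, the sequence $1\cdot q\cdot q^2\cdots q^{f-1}$ is $\mathbb{Z}_{q^f}^*$-extremal, since any unit-weighted combination of a nonempty subsequence has the same $q$-adic valuation as its lowest term and so is nonzero) gives $D_{T_{q^f}}(\mathbb{Z}_{q^f})=f+1=\Omega(q^f)+1$. For a prime power $p^e$ with $p\equiv 1\pmod 3$, Lemma~\ref{prime} handles $e=1$ giving $D_{T_p}(\mathbb{Z}_p)=3$; for general $e$ I would exhibit the extremal sequence $1\cdot g\cdot p\cdot pg\cdot p^2\cdot p^2 g\cdots p^{e-1}\cdot p^{e-1}g$ of length $2e$, where $g\in\mathbb{Z}_{p^e}^*$ is a non-cube, and check that no nonempty $T_{p^e}$-weighted subsequence sums to $0$: grouping by $p$-adic valuation, the lowest-valuation pair contributes a term of the form $p^k(a + bg)$ with $a,b\in T_{p^e}$ not both zero, and since $g$ is a non-cube and the cubic residues form a subgroup, $a+bg$ is a unit (it cannot be divisible by $p$, as that would force $g\equiv -a b^{-1}$ to be a cube mod $p$), so the total sum has valuation exactly $k<e$. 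This gives $D_{T_{p^e}}(\mathbb{Z}_{p^e})\geq 2e+1=2\Omega(p^e)+1$, which is all that is needed for the lower bound.

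Substituting $D_{T_{p_i^{e_i}}}(\mathbb{Z}_{p_i^{e_i}})\geq 2e_i+1$ and $D_{T_{q_j^{f_j}}}(\mathbb{Z}_{q_j^{f_j}})\geq f_j+1$ into the iterated inequality, the $+1$'s from the $r+s$ prime powers cancel against the $-(r+s)$, leaving
\begin{equation*}
D_{T_n}(\mathbb{Z}_n)\ \geq\ 2\sum_{i=1}^r e_i + \sum_{j=1}^s f_j + 1\ =\ 2\Omega(n_1)+\Omega(n_2)+1,
\end{equation*}
which is the first claim. The second claim is then immediate from the weighted Gao relation \eqref{2eq2}: $E_{T_n}(\mathbb{Z}_n)=D_{T_n}(\mathbb{Z}_n)+|\mathbb{Z}_n|-1\geq 2\Omega(n_1)+\Omega(n_2)+n$.

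The only real obstacle is the prime-power case $p^e$ with $p\equiv 1\pmod 3$ and $e\geq 2$: one must be a little careful that the constructed length-$2e$ sequence genuinely has no $T_{p^e}$-weighted zero-sum subsequence. The key point is that when $p\equiv 1\pmod 3$ the subgroup $T_{p^e}\le\mathbb{Z}_{p^e}^*$ has index $3$, and reduction mod $p$ carries $T_{p^e}$ onto the index-$3$ subgroup of $\mathbb{Z}_p^*$ (Remark~\ref{2rmk2}); so for $a,b\in T_{p^e}$ not both $0\bmod p$, the element $a+bg$ reduces mod $p$ to a nonzero element unless $g$ is a cube mod $p$, which it is not. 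Everything else — the induction via Lemma~\ref{lowerbound}, the $q\equiv 2$ valuation argument, and the passage to $E_{T_n}$ — is routine. (For Theorem~\ref{mt1} itself one only needs the square-free case $e_i=f_j=1$, where the prime-power inputs are exactly Lemma~\ref{prime} and $D_{\mathbb{Z}_q^*}(\mathbb{Z}_q)=2$, but stating Proposition~\ref{lowerbound2} in the stronger prime-power generality costs nothing.)
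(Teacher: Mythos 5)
Your proposal is correct and follows essentially the same route as the paper: iterate Lemma~\ref{lowerbound} over the prime(-power) factors (the containment hypothesis being supplied by Remark~\ref{2rmk2}), feed in the base values $D_{T_p}(\mathbb{Z}_p)=3$ for $p\equiv 1\pmod 3$ and $D_{T_q}(\mathbb{Z}_q)=2$ for $q\equiv 2\pmod 3$, and finish with relation~\eqref{2eq2}. The only difference is at the prime-power level: you stop the iteration at $p^e$ and $q^f$ and verify explicit extremal sequences there by a $p$-adic valuation argument, whereas the paper (implicitly, since Lemma~\ref{lowerbound} does not require the two factors to be coprime) just applies the same lemma $\Omega(n)$ times, splitting $p^e$ as $p\cdot p^{e-1}$, so that only the single-prime base cases are ever needed; your explicit sequence is exactly the one that this recursion unrolls to, and for the square-free application the two arguments coincide.
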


\begin{proof}
Let $p$ be a prime such that $p \equiv 1 \pmod 3$, then from Lemma \ref{prime}, we get $D_{T_p}(\mathbb{Z}_p)=3=2 \Omega(p)+1$. Again if $p$ is a prime such that $p \equiv 2 \pmod 3$, then we know that $T_p=\mathbb{Z}_p^*$, and hence $D_{T_p}(\mathbb{Z}_p)=2= \Omega(p)+1$.

Since $\displaystyle\Omega(n_1)=\Omega \left(\prod_{i =1}^r p_i^{e_i}\right)=\sum_{i =1}^r \Omega(p_i^{e_i})$, by Remark \ref{2rmk2} and Lemma \ref{lowerbound} we get, $D_{T_{n_1}}(\mathbb{Z}_{n_1}) \geq 2 \Omega(n_1)+1$.

Similarly, as $\displaystyle\Omega(n_2)=\Omega\left(\prod_{j =1}^s q_j^{f_j}\right)=\sum_{j =1}^s \Omega(q_j^{f_j})$, by Remark \ref{2rmk2} and Lemma \ref{lowerbound} we get, $D_{T_{n_2}}(\mathbb{Z}_{n_2}) \geq  \Omega(n_2)+1$.

Therefore again by Lemma \ref{lowerbound}, we get,
\begin{eqnarray*}
D_{T_n}(\mathbb{Z}_n) &\geq & D_{T_{n_1}}(\mathbb{Z}_{n_1})+D_{T_{n_2}}(\mathbb{Z}_{n_2})-1\\
&\geq &  2 \Omega(n_1)+1+\Omega(n_2)+1-1 \\
&=&  2 \Omega(n_1)+\Omega(n_2)+1,
\end{eqnarray*}
and this proves the lower bound for $D_{T_n}(\mathbb{Z}_n)$. Hence from the relation \eqref{2eq2}, we get $E_{T_n}(\mathbb{Z}_n) \geq n+2 \Omega(n_1)+\Omega(n_2)$.
\end{proof}

To prove the upper bound, we need the following proposition. We use Lemma \ref{three}, Lemma \ref{two} and the Chinese Remainder Theorem to prove the proposition.

\medskip

\begin{proposition}\label{2prop1}
Let $n=n_1n_2$ be a square-free odd integer which is not divisible by 7 and 13 such that $n_1 =\prod_{i =1}^r p_i$ and $n_2=\prod_{j =1}^s q_j$  with primes $p_i \equiv 1 \pmod 3$ and $q_j \equiv 2 \pmod 3$. Let $m \geq 3\omega(n_1)+2\omega(n_2)$ and $S = x_1 \cdot x_2\cdot \ldots \cdot x_{m+2\Omega(n_1)+\Omega(n_2)}$ be a sequence over $\mathbb{Z}_n$. Then there exists a subsequence $x_{i_1}\cdot x_{i_2}\cdot\ldots \cdot x_{i_m}$ of $S$ and $a_1,a_2,\ldots,a_m \in T_n$ such that $\sum_{j=1}^m a_j x_{i_j} \equiv 0 \pmod n$.
\end{proposition}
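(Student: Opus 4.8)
The plan is to reduce the problem, via Observation~\ref{obs}, to a combinatorial selection and then to apply Lemma~\ref{three} and Lemma~\ref{two} one prime at a time. Since $n$ is square-free, $\omega(n_1)=\Omega(n_1)$ and $\omega(n_2)=\Omega(n_2)$, so $S$ has length $m+2\Omega(n_1)+\Omega(n_2)$ with $m\ge 3\Omega(n_1)+2\Omega(n_2)$. For a prime $p\mid n$ put $c_p=3$ if $p\equiv 1\pmod 3$ and $c_p=2$ if $p\equiv 2\pmod 3$, and call a term of a sequence a \emph{$p$-unit} if it is a unit modulo $p$ (equivalently, not divisible by $p$, as $n$ is square-free). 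I claim that any subsequence $T\mid S$ for which, modulo every prime $p\mid n$, the number of $p$-units of $T$ is either $0$ or at least $c_p$ is automatically $T_n$-weighted zero-sum: by Observation~\ref{obs} it is enough to check that $T^{(p)}$ is $T_p$-weighted zero-sum for each $p\mid n$, and modulo such a $p$ one of three things happens — every term of $T$ is divisible by $p$ (take all weights $1$), or $p\equiv 1\pmod 3$ and hence $p\ne 7,13$ and $T^{(p)}$ has at least three unit terms (apply Lemma~\ref{three}), or $p\equiv 2\pmod 3$ and $T^{(p)}$ has at least two unit terms (apply Lemma~\ref{two}, noting $T_p=\mathbb{Z}_p^{*}$). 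So it suffices to exhibit a subsequence $T\mid S$ of length exactly $m$ with this $p$-unit property.

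To produce such a $T$ I would first run a marking procedure on $S$. Begin with $S$ and no marked primes; while some unmarked prime $p\mid n$ has fewer than $c_p$ $p$-units in the current sequence, mark $p$ and delete from the current sequence all of its remaining $p$-units. This halts, since the set of marked primes only grows, and at termination every unmarked prime has at least $c_p$ $p$-units left. The decisive observation is that at the moment a prime $p$ is marked it has at most $c_p-1$ $p$-units remaining, so marking a prime $p\equiv 1\pmod 3$ costs at most two deletions and marking a prime $q\equiv 2\pmod 3$ costs at most one deletion, whatever order the primes happen to be marked in. Hence the total set $E$ of deleted terms satisfies $|E|\le 2\Omega(n_1)+\Omega(n_2)$, so $S':=S\setminus E$ has length at least $m$. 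By construction, in $S'$ every marked prime $p$ has no $p$-units (every term of $S'$ is divisible by $p$), while every unmarked prime $p$ has at least $c_p$ $p$-units.

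It remains to pass from $S'$ to a length-$m$ subsequence keeping enough units. For each unmarked prime $p$ choose a set $V_p$ consisting of $c_p$ $p$-units of $S'$, and put $V=\bigcup_p V_p$; then $|V|\le\sum_{p\ \mathrm{unmarked}}c_p\le 3\Omega(n_1)+2\Omega(n_2)\le m\le|S'|$, so one may choose $T$ with $V\subseteq T$, $T\mid S'$ and $|T|=m$. For a marked prime $p$ all terms of $T$ are divisible by $p$, and for an unmarked prime $p$ the subsequence $T$ contains $V_p$ and so has at least $c_p$ $p$-units; thus $T$ has the $p$-unit property of the first paragraph, and is a $T_n$-weighted zero-sum subsequence of $S$ of length $m$, as required.

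The step I expect to be the crux is the deletion count in the second paragraph: marking one prime can drop the number of units modulo another prime below its threshold and so force further markings, and one must be sure that this cascade never deletes more than $2\Omega(n_1)+\Omega(n_2)$ terms in total — which is exactly what the bound ``at most $c_p-1$ $p$-units remaining at the moment $p$ is marked'' delivers, independently of the order of marking. The hypothesis $m\ge 3\omega(n_1)+2\omega(n_2)$ enters only at the very end, where it is precisely what guarantees that the reserved units $V$ fit inside a subsequence of length $m$.
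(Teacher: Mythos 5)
Your proof is correct and rests on the same ingredients as the paper's: remove the non-units modulo each ``deficient'' prime at a cost of at most $2$ per prime $\equiv 1 \pmod 3$ and $1$ per prime $\equiv 2 \pmod 3$, then apply Lemma~\ref{three}, Lemma~\ref{two} and the Chinese Remainder Theorem (Observation~\ref{obs}) to the surviving primes. The only difference is organizational: the paper runs an induction on $\Omega(n)$ that peels off one deficient prime at a time, whereas you handle the cascade of deficient primes in a single marking pass --- the deletion count you flag as the crux is exactly what the induction absorbs in the paper's Cases 1 and 2.
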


\begin{proof}
We proceed by induction on $\Omega(n)$. When $\Omega(n) = 1$, then $n$ is a prime, say $n = p$. If $p \equiv 1 \pmod 3$ then by our assumption, $p > 13$. By Lemma \ref{three}, if a sequence $S = x_1\cdot x_2\cdot \ldots \cdot x_{m+2}$ over $\mathbb{Z}_p$ has at least three non-zero elements, then there are $a_i \in T_p$ for $i = 1,2,\ldots, m$ such that $\sum_{i =1}^m a_i x_i \equiv 0 \pmod p$. Otherwise there is a subsequence $S_1 = x_{i_1}\cdot x_{i_2} \cdot \ldots \cdot x_{i_m}$ of $S$ of length $m$ such that all elements of $S_1$ are divisible by $p$. Hence $\sum_{j=1}^m a_j x_{i_j} \equiv 0 \pmod p$ for any choice of $a_i \in T_p$.

Now if $p \equiv 2 \pmod 3$, then we know that $T_p =\mathbb{Z}_p^*$. Hence by Lemma \ref{two}, if a sequence $S = x_1\cdot x_2 \cdot \ldots \cdot x_{m+1}$ over $\mathbb{Z}_p$ has at least two non-zero elements, then there are $a_i \in T_p$ for $i = 1,2,\ldots, m$ such that $\sum_{i =1}^m a_i x_i \equiv 0 \pmod p$. Otherwise there is a subsequence $S_1 = x_{i_1}\cdot x_{i_2}\cdot \ldots \cdot x_{i_m}$ of $S$ of length $m$ such that all elements of $S_1$ are divisible by $p$. Hence $\sum_{j=1}^m a_j x_{i_j} \equiv 0 \pmod p$ for any choice of $a_i \in T_p$. This proves the proposition when $\Omega(n) = 1$.

Suppose now that $\Omega(n) \geq 2$ and the result is true for any odd integer $N = N_1N_2$ such that $7,13 \nmid N$ where each prime divisor of $N_1$ is congruent to 1 modulo 3 and each prime divisor of $N_2$ is congruent to 2 modulo 3 with $\Omega(N_1) < \Omega(n_1)$ or $\Omega(N_2) < \Omega(n_2)$. Let $S = x_1\cdot x_2\cdot \ldots \cdot x_{m+2\Omega(n_1)+\Omega(n_2)}$ be a sequence over $\mathbb{Z}_n$.

\medskip

{\noindent \sf Case 1.} There exists a prime $p_t \mid n_1$ such that the sequence $S$ contains at most two elements which are co prime to $p_t$.

In this case, we remove those elements and consider the subsequence $S_1$ of $S$ of length at least $m + 2\Omega(\frac{n_1}{p_t})+\Omega(n_2)$ all of whose elements are zero modulo $p_t$. Since $m \geq 3 \omega(\frac{n_1}{p_t})+2\omega(n_2)$, by the induction hypothesis, there is a subsequence $x_{i_1}\cdot x_{i_2}\cdot \ldots \cdot x_{i_m}$ of $S_1$ and $a_1,a_2,\ldots,a_m \in T_{\frac{n}{p_t}}$ such that $$\sum_{j=1}^m a_j \frac{x_{i_j}}{p_t} \equiv 0 \pmod {n/p_t}.$$ Since $n^{'} = n/p_t$ divides $n$, by Remark \ref{2rmk2}, we see that $\psi(T_n) = T_{\frac{n}{p_t}}$. Thus for each $a_j$ there exists $a_j^{'} \in T_n$ such that $\psi(a_j^{'}) = a_j$ i.e. $a_j^{'} \equiv a_j \pmod {n/p_t}$. Hence $$\sum_{j=1}^m a_j^{'} \frac{x_{i_j}}{p_t} \equiv 0 \pmod {n/p_t}.$$ Therefore, $$\sum_{j=1}^m a_j^{'} x_{i_j} \equiv 0 \pmod n.$$

\medskip

{\noindent \sf Case 2.} There is a prime $q_t \mid n_2$ such that the sequence $S$ contains at most one element which is co prime to $q_t$.

In this case, we remove this element and consider the subsequence $S_1$ of $S$ of length at least $m +2\Omega(n_1) +\Omega(\frac{n_2}{q_t})$ all of whose elements are zero modulo $q_t$. Since $m \geq 3\omega(n_1)+ 2 \omega(\frac{n_2}{q_t})$, by the induction hypothesis, there is a subsequence $x_{i_1}\cdot x_{i_2}\cdot \ldots \cdot x_{i_m}$ of $S_1$ and $a_1,a_2,\ldots,a_m \in T_{\frac{n}{q_t}}$ such that $$\sum_{j=1}^m a_j \frac{x_{i_j}}{q_t} \equiv 0 \pmod {n/q_t}.$$ As in the Case 1, using Remark \ref{2rmk2}, there exist $a_1^{'}, a_2^{'},\ldots, a_m^{'} \in T_n$ such that $\sum_{j=1}^m a_j^{'} x_{i_j} \equiv 0 \pmod n$.

\medskip

{\noindent \sf Case 3.} For all primes $p_i \mid n_1$, the sequence $S$ contains at least three unit elements modulo $p_i$ and for all primes $q_j \mid n_2$, the sequence $S$ contains at least two unit elements modulo $q_j$.

In this case, without loss of generality, let $S_1 = x_1 \cdot x_2 \cdot \ldots \cdot x_t$ be a subsequence of length $t \leq 3\omega(n_1)+ 2\omega(n_2) \leq m$ such that $S_1$ has at least three units modulo each prime $p_i$ and at least two units modulo each prime $q_j$. Now let us extend this subsequence $S_1$ to a subsequence $S_2 = x_1 \cdot x_2 \cdot \ldots \cdot x_m$ of $S$ of length $m$. Then by Lemma \ref{three} and Lemma \ref{two}, for each $p_i$ and $q_j$, we have $$\sum_{k=1}^m a_k^i x_k \equiv 0 \pmod {p_i^{e_i}} \text{ and } \sum_{k=1}^m b_k^j x_k \equiv 0 \pmod {q_j^{f_j}},$$ where $a_k^i \in T_{p_i^{e_i}}$ and $b_k^j \in T_{q_j^{f_j}}$. Now the result follows from the Chinese Remainder Theorem and Lemma \ref{crt}.
\end{proof}

\medskip

\noindent{\bf Proof of Theorem \ref{mt1}.} Since $n=n_1n_2$ is an odd integer such that $n_1 =\prod_{i =1}^r p_i$ and $n_2=\prod_{j =1}^s q_j$  with primes $p_i \equiv 1 \pmod 3$ and $q_j \equiv 2 \pmod 3$ and $7, 13 \nmid n$, we have $n \geq p_1p_2\ldots p_{\omega(n_1)}q_1q_2\ldots q_{\omega(n_2)} > 13^{\omega(n_1)}5^{\omega(n_2)} \geq 3\omega(n_1)+ 2 \omega(n_2)$. Hence by Proposition \ref{2prop1}, any sequence over $\mathbb{Z}_n$ of length $n + 2\Omega(n_1)+\Omega(n_2)$ has an $T_n$-weighted zero-sum subsequence of length $n$. This proves that $E_{T_n} (\mathbb{Z}_n) \leq n + 2\Omega(n_1)+ \Omega(n_2)$. Therefore from Proposition \ref{lowerbound2}, we get $E_{T_n} (\mathbb{Z}_n) = n + 2\Omega(n_1)+ \Omega(n_2)$, and the relation \eqref{2eq2} gives $D_{T_n}(\mathbb{Z}_n) = 2\Omega(n_1)+ \Omega(n_2) +1$.\qed

\section{Proof of Theorem \ref{extremal}}

Let $n=n_1n_2$ be a square-free odd integer which is not divisible by 7 and 13 such that $n_1 =\prod_{i =1}^r p_i$ and $n_2=\prod_{j =1}^s q_j$  with primes $p_i \equiv 1 \pmod 3$ and $q_j \equiv 2 \pmod 3$. Then we know that $D_{T_n}(\mathbb{Z}_n) = 2\Omega(n_1)+ \Omega(n_2) +1$. We now give a method to construct $T_n$-extremal sequences in $\mathbb{Z}_n$.

\begin{lemma}\label{ext1}
Let $n=n_1n_2$ be a square-free odd integer which is not divisible by 7 and 13 such that $n_1 =\prod_{i =1}^r p_i$ and $n_2=\prod_{j =1}^s q_j$  with primes $p_i \equiv 1 \pmod 3$ and $q_j \equiv 2 \pmod 3$. Let $p$ be a prime divisor of $n$, and $n^{'}=\frac{n}{p}$. Then
the following constructions give us an $T_n$-extremal sequence $S$ in $\mathbb{Z}_n$:

{\noindent \sf Case 1.} $p \equiv 1 \pmod 3$. 

Let $S_1^{'}:x_1 \cdot x_2 \cdot \ldots \cdot x_{\ell}$ be an $T_{n^{'}}$- extremal sequence in $\mathbb{Z}_{n^{'}}$, and $x^*, x^{**} \in \mathbb{Z}_n$ be such that the image of the sequence $x^* \cdot x^{**}$ under the natural map $\mathbb{Z}_n \rightarrow \mathbb{Z}_p$ has no $T_p$-weighted zero-sum subsequence. Then the sequence $S:x^* \cdot x^{**}\cdot px_1 \cdot px_2 \cdot \ldots \cdot px_{\ell}$ is $T_n$-extremal sequence in $\mathbb{Z}_n$.  

\medskip

{\noindent \sf Case 2.} $p \equiv 2 \pmod 3$.

Let $S_1^{'}:x_1 \cdot x_2 \cdot \ldots \cdot x_{\ell}$ be an $T_{n^{'}}$- extremal sequence in $\mathbb{Z}_{n^{'}}$, and $x^* \in \mathbb{Z}_n$ be such that it is not divisible by $p$. Then the sequence $S:x^* \cdot px_1 \cdot px_2 \cdot \ldots \cdot px_{\ell}$ is $T_n$-extremal sequence in $\mathbb{Z}_n$.
\end{lemma}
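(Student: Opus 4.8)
The plan is to verify directly that the constructed sequence $S$ has the right length and has no non-empty $T_n$-weighted zero-sum subsequence; extremality then follows because the length equals $D_{T_n}(\mathbb{Z}_n)-1$ by Theorem \ref{mt1}. First I would check the length. In Case 1, since $S_1'$ is $T_{n'}$-extremal we have $\ell = D_{T_{n'}}(\mathbb{Z}_{n'})-1 = 2\Omega(n_1') + \Omega(n_2')$ where $n' = n_1' n_2'$ is the corresponding factorization; as $p \equiv 1 \pmod 3$ we have $n_1 = p n_1'$, so $S$ has length $\ell + 2 = 2\Omega(n_1) + \Omega(n_2) = D_{T_n}(\mathbb{Z}_n)-1$. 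In Case 2, $p \equiv 2 \pmod 3$ gives $n_2 = p n_2'$, and $S$ has length $\ell + 1 = 2\Omega(n_1) + \Omega(n_2) = D_{T_n}(\mathbb{Z}_n)-1$. So in both cases the length is exactly $D_{T_n}(\mathbb{Z}_n)-1$, and it remains only to show $S$ has no non-empty $T_n$-weighted zero-sum subsequence.

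For the zero-sum condition I would use Observation \ref{obs}: a subsequence $T$ of $S$ is $T_n$-weighted zero-sum if and only if $T^{(q)}$ is $T_{q^r}$-weighted zero-sum for every prime power $q^r \mid\mid n$ (here all $r = 1$ since $n$ is square-free). Suppose for contradiction that $T \mid S$ is a non-empty $T_n$-weighted zero-sum subsequence. Consider first the reduction mod $p$. In Case 1, every term $p x_i$ reduces to $0$ mod $p$, so $T^{(p)}$ is supported only on (a subset of) $\{x^*, x^{**}\}$ mod $p$. Being $T_p$-weighted zero-sum, and since $x^* \cdot x^{**}$ has no $T_p$-weighted zero-sum subsequence mod $p$ by hypothesis, $T^{(p)}$ must be empty, i.e.\ $T$ contains neither $x^*$ nor $x^{**}$. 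Hence $T$ is a subsequence of $p x_1 \cdot \ldots \cdot p x_\ell$, say $T = p x_{i_1} \cdot \ldots \cdot p x_{i_t}$ with $t \geq 1$, and there are $a_j \in T_n$ with $\sum_j a_j p x_{i_j} \equiv 0 \pmod n$, i.e.\ $\sum_j a_j x_{i_j} \equiv 0 \pmod{n'}$. By Remark \ref{2rmk2}, the images of the $a_j$ under $\mathbb{Z}_n \to \mathbb{Z}_{n'}$ lie in $T_{n'}$, so $x_{i_1} \cdot \ldots \cdot x_{i_t}$ is a non-empty $T_{n'}$-weighted zero-sum subsequence of $S_1'$, contradicting its extremality. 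Case 2 is the same argument, except the role of $\{x^*, x^{**}\}$ is played by the single term $x^*$: since $x^*$ is a unit mod $p$ and no single unit is a $T_p$-weighted zero-sum sequence (as $0 \notin T_p \cdot \{x^*\}$ when $x^*$ is a unit), $T^{(p)}$ empty forces $T$ to avoid $x^*$, and we again descend to $S_1'$ and contradict its extremality.

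The one routine point worth spelling out is why, in Case 1, a length-one or length-two subsequence of $x^* \cdot x^{**}$ being $T_p$-weighted zero-sum is excluded: this is exactly the hypothesis that the image of $x^* \cdot x^{**}$ in $\mathbb{Z}_p$ has no $T_p$-weighted zero-sum subsequence, so nothing extra is needed. The main (and only real) obstacle is bookkeeping: making sure the factorization $n' = n_1' n_2'$ is matched correctly to the two cases so that the length count comes out to $2\Omega(n_1) + \Omega(n_2)$, and making sure the lift of weights from $T_{n'}$ to $T_n$ via Remark \ref{2rmk2} is applied in the right direction. Once those are handled, the argument is a clean descent to the smaller modulus $n'$.
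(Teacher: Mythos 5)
Your proposal is correct and follows essentially the same route as the paper's proof: verify the length is $2\Omega(n_1)+\Omega(n_2)=D_{T_n}(\mathbb{Z}_n)-1$, then rule out a non-empty $T_n$-weighted zero-sum subsequence $T$ by reducing mod $p$ to force $T$ to avoid $x^*$ (and $x^{**}$), and then descend to $\mathbb{Z}_{n'}$ via Remark \ref{2rmk2} to contradict the extremality of $S_1'$. The only cosmetic slip is calling $T^{(p)}$ ``empty'' when you mean that $T$ contains none of the terms that are units mod $p$; the mathematics is unaffected.
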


\begin{proof}

{\noindent \sf Case 1.} $p \equiv 1 \pmod 3$. 

Since $S_1^{'}$ is an $T_{n^{'}}$-extremal sequence in $\mathbb{Z}_{n^{'}}$ of length $\ell$ , we have $\ell= 2 \Omega(\frac{n_1}{p})+\Omega(n_2)$. Let $S$ be the sequence as in the statement, and if possible let, $T$ be a $T_n$-weighted zero-sum subsequence of $S$.

Define the sequence $S_1: px_1 \cdot px_2 \cdot \ldots \cdot px_{\ell}$. If $T$ is a subsequence of $S_1$, then we get a contradiction that $S_1^{'}$ has a non-empty $T_{n^{'}}$-weighted zero-sum subsequence, as the image of $T_n$ under the natural map $\mathbb{Z}_n \rightarrow \mathbb{Z}_{n^{'}}$ is contained in $T_{n^{'}}$.

Thus $T$ is divisible by either $x^*$ or $x^{**}$. In this case also, by taking the $T_n$-weighted sum of $T$, we get an $T_p$-weighted zero-sum subsequence of the image of $x^*\cdot x^{**}$ under the natural map $\mathbb{Z}_n \rightarrow \mathbb{Z}_p$. A contradiction to our choice of $x^*, x^{**}$.

Therefore the sequence $S$ has no non-empty $T_n$-weighted zero-sum subsequence, and since the length of $S$ is $\ell+2=2\Omega(n_1)+ \Omega(n_2)$, $S$ is an $T_n$-extremal sequence in $\mathbb{Z}_n$.

\medskip

{\noindent \sf Case 2.} $p \equiv 2 \pmod 3$. 

Since $S_1^{'}$ is an $T_{n^{'}}$-extremal sequence of length $\ell$ in $\mathbb{Z}_{n^{'}}$, we have $\ell= 2\Omega(n_1)+ \Omega(\frac{n_2}{p})$. Let $S$ be the sequence as in the statement. If possible let $T$ be a $T_n$-weighted zero-sum subsequence of $S$. 

Define the sequence $S_1: px_1 \cdot px_2 \cdot \ldots \cdot px_{\ell}$. If $T$ is a subsequence of $S_1$, then we get a contradiction that $S_1^{'}$ has a non-empty $T_{n^{'}}$-weighted zero-sum subsequence, as the image of $T_n$ under the natural map $\mathbb{Z}_n \rightarrow \mathbb{Z}_{n^{'}}$ is contained in $T_{n^{'}}$.

Thus $T$ is divisible by $x^*$. Now by taking the $T_n$-weighted sum of $T$, we get a contradiction that $p \mid x^*$. Therefore the sequence $S$ has no non-empty $T_n$-weighted zero-sum subsequence, and since the length of $S$ is $\ell+1=2\Omega(n_1)+ \Omega(n_2)$, $S$ is an $T_n$-extremal sequence in $\mathbb{Z}_n$.

\end{proof}

We now show that the construction of $T_n$-extremal sequences given in Lemma \ref{ext1} is the only way to construct it. Before that, we need the following lemma.

\begin{lemma}\label{lem}
Let $n=n_1n_2$ be a square-free odd integer which is not divisible by 7 and 13 such that $n_1 =\prod_{i =1}^r p_i$ and $n_2=\prod_{j =1}^s q_j$  with primes $p_i \equiv 1 \pmod 3$ and $q_j \equiv 2 \pmod 3$. Let $S$ be a sequence over $\mathbb{Z}_n$ of length $\ell=2\Omega(n_1)+ \Omega(n_2)$, and $q$ a prime divisor of $n$. If $q \mid n_1$ (respectively, $q \mid n_2$) and $T$ is a subsequence of $S$ of length $\ell-1$ (respectively, $\ell$) such that every term of $T$ is divisible by $q$, then $S$ has an $T_n$-weighted zero-sum subsequence.
\end{lemma}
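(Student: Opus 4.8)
The plan is to reduce the statement to the definition of the weighted Davenport constant for the smaller cyclic group $\mathbb{Z}_{n'}$, where $n' = n/q$. Since $n$ is square-free, odd, and coprime to $7$ and $13$, so is $n'$; write $n' = n_1' n_2'$, where $n_1' = n_1/q$ and $n_2' = n_2$ if $q \mid n_1$, and $n_1' = n_1$ and $n_2' = n_2/q$ if $q \mid n_2$. Then Theorem \ref{mt1} applies to $\mathbb{Z}_{n'}$ and gives $D_{T_{n'}}(\mathbb{Z}_{n'}) = 2\Omega(n_1') + \Omega(n_2') + 1$. Now push $T$ down to $\mathbb{Z}_{n'}$: every term of $T$ is divisible by $q$, so write each such term as $q y_i$ with $y_i \in \mathbb{Z}_n$, and let $T'$ be the image of $y_1 \cdot y_2 \cdot \ldots$ under the natural map $\mathbb{Z}_n \to \mathbb{Z}_{n'}$, a sequence over $\mathbb{Z}_{n'}$ of the same length as $T$.

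The crucial observation is a length count. If $q \mid n_1$ (so $q \equiv 1 \pmod 3$ and $\Omega(n_1) = \Omega(n_1') + 1$), then $T$ has length $\ell - 1 = 2\Omega(n_1) + \Omega(n_2) - 1 = 2\Omega(n_1') + \Omega(n_2') + 1 = D_{T_{n'}}(\mathbb{Z}_{n'})$; if $q \mid n_2$ (so $q \equiv 2 \pmod 3$ and $\Omega(n_2) = \Omega(n_2') + 1$), then $T$ has length $\ell = 2\Omega(n_1) + \Omega(n_2) = 2\Omega(n_1') + \Omega(n_2') + 1 = D_{T_{n'}}(\mathbb{Z}_{n'})$. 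In either case $T'$ has length $D_{T_{n'}}(\mathbb{Z}_{n'})$, so by the definition of the weighted Davenport constant $T'$ contains a non-empty $T_{n'}$-weighted zero-sum subsequence: there are indices $i_1, \ldots, i_t$ and $a_1, \ldots, a_t \in T_{n'}$ with $\sum_{j=1}^t a_j y_{i_j} \equiv 0 \pmod{n'}$.

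Finally I would lift this relation to $\mathbb{Z}_n$. By Remark \ref{2rmk2}, $T_{n'}$ is the image of $T_n$ under $\mathbb{Z}_n \to \mathbb{Z}_{n'}$, so I may choose $a_j' \in T_n$ with $a_j' \equiv a_j \pmod{n'}$; then $\sum_{j=1}^t a_j' y_{i_j} \equiv 0 \pmod{n'}$, and multiplying through by $q$ yields $\sum_{j=1}^t a_j' (q y_{i_j}) \equiv 0 \pmod{n}$. Since $q y_{i_j}$ is precisely the reduction mod $n$ of the corresponding term of $T$, this exhibits a non-empty $T_n$-weighted zero-sum subsequence of $T$, hence of $S$.

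The argument is essentially a clean reduction, so I do not anticipate a serious obstacle; the points that require care are the exact length bookkeeping in the two cases (which is where the hypotheses ``length $\ell - 1$'' versus ``length $\ell$'' are used, and where the split of $n$ into $n_1$ and $n_2$ matters) and invoking Remark \ref{2rmk2} correctly so that the chosen weights lie in $T_n$ and not merely in $T_{n'}$. The degenerate case $n' = 1$, which can occur when $n$ is prime, is immediate: then every term of $T$ is already $\equiv 0 \pmod n$, so any single term of $T$ with weight $1 \in T_n$ is a non-empty $T_n$-weighted zero-sum subsequence.
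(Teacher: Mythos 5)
Your proof is correct and follows essentially the same route as the paper: divide the terms of $T$ by $q$, observe that the resulting sequence over $\mathbb{Z}_{n'}$ has length exactly $D_{T_{n'}}(\mathbb{Z}_{n'}) = 2\Omega(n_1')+\Omega(n_2')+1$ by Theorem \ref{mt1}, and lift the resulting $T_{n'}$-weighted zero-sum relation back to $\mathbb{Z}_n$ via Remark \ref{2rmk2}. The only difference is that you explicitly handle the degenerate case $n'=1$, which the paper leaves implicit.
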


\begin{proof}
Let $q \mid n_1$, $n_{1}^{'}=\frac{n_1}{q}$ and $n^{'}=n_1^{'}n_2$. Let $T$ be the subsequence of $S$ of length $\ell-1$ such that every term of $T$ is divisible by $q$ and $T^{'}$ the sequence in $\mathbb{Z}_{n^{'}}$ which is obtained by dividing each term of $T$ by $q$. Then the length of $T^{'}$ is $\ell-1=2\Omega(n_1^{'})+ \Omega(n_2)+1$, and hence $T^{'}$ has an $T_{n^{'}}$-weighted zero-sum subsequence in $\mathbb{Z}_{n^{'}}$. Therefore, by Remark \ref{2rmk2}, we get that $T$ has an $T_{n}$-weighted zero-sum subsequence in $\mathbb{Z}_n$.

Now, let $q \mid n_2$, $n_{2}^{'}=\frac{n_2}{q}$ and $n^{'}=n_1n_2^{'}$. Let $T$ be the subsequence of $S$ of length $\ell$ such that every term of $T$ is divisible by $q$ and $T^{'}$ the sequence in $\mathbb{Z}_{n^{'}}$ which is obtained by dividing each term of $T$ by $q$. Then the length of $T^{'}$ is $\ell=2\Omega(n_1)+ \Omega(n_2^{'})+1$, and hence $T^{'}$ has an $T_{n^{'}}$-weighted zero-sum subsequence in $\mathbb{Z}_{n^{'}}$. Therefore, by Remark \ref{2rmk2}, we get that $T$ has an $T_{n}$-weighted zero-sum subsequence in $\mathbb{Z}_n$.

\end{proof}

The previous lemma has the following obvious corollary.

\begin{corollary}\label{exact}
Let $n=n_1n_2$ be a square-free odd integer which is not divisible by 7 and 13 such that $n_1 =\prod_{i =1}^r p_i$ and $n_2=\prod_{j =1}^s q_j$  with primes $p_i \equiv 1 \pmod 3$ and $q_j \equiv 2 \pmod 3$. Let $S$ be a $T_n$-extremal sequence in $\mathbb{Z}_n$. Then for every prime divisor $q$ of $n_1$ (respectively, of $n_2$), $q$ is co-prime to at least two terms (respectively, at least one term) of $S$. 
\end{corollary}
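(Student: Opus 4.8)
The plan is to obtain Corollary \ref{exact} as an immediate contrapositive of Lemma \ref{lem}. Recall that, by definition, a $T_n$-extremal sequence $S$ in $\mathbb{Z}_n$ has length exactly $\ell = D_{T_n}(\mathbb{Z}_n) - 1 = 2\Omega(n_1) + \Omega(n_2)$ (using Theorem \ref{mt1}) and contains no non-empty $T_n$-weighted zero-sum subsequence. I would fix a $T_n$-extremal sequence $S = x_1 \cdot x_2 \cdot \ldots \cdot x_\ell$ once and for all and then argue separately for prime divisors of $n_1$ and of $n_2$.

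First, let $q$ be a prime divisor of $n_1$, and suppose toward a contradiction that $q$ is co-prime to at most one term of $S$. Then at least $\ell - 1$ of the terms $x_i$ are divisible by $q$, so by selecting $\ell - 1$ of them we obtain a subsequence $T \mid S$ of length $\ell - 1$ every term of which is divisible by $q$. Lemma \ref{lem} (the case $q \mid n_1$) now yields a non-empty $T_n$-weighted zero-sum subsequence of $S$, contradicting $T_n$-extremality. Hence $q$ must be co-prime to at least two terms of $S$.

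The case $q \mid n_2$ is handled the same way, with the length threshold shifted by one: if $q$ were co-prime to no term of $S$, then $q$ divides every term of $S$, so $S$ itself is a subsequence of length $\ell$ all of whose terms are divisible by $q$. Applying Lemma \ref{lem} (the case $q \mid n_2$) again produces a non-empty $T_n$-weighted zero-sum subsequence of $S$, a contradiction; therefore $q$ is co-prime to at least one term of $S$.

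I do not anticipate any genuine obstacle: the only point requiring (trivial) care is the counting step, namely that ``co-prime to at most one term'' of a sequence of length $\ell$ leaves at least $\ell - 1$ terms divisible by $q$ (and similarly ``co-prime to no term'' leaves all $\ell$), so that a subsequence of the length demanded by Lemma \ref{lem} can indeed be extracted. Everything else is a direct citation of Lemma \ref{lem}.
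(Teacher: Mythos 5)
Your argument is correct and is exactly the contrapositive reading of Lemma \ref{lem} that the paper has in mind when it calls the corollary ``obvious''; the counting step you flag (at most one coprime term leaves $\ell-1$ terms divisible by $q$, and no coprime term leaves all $\ell$) is handled properly. Nothing further is needed.
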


We are now in a position to prove Theorem \ref{extremal}.

\medspace

\noindent{\bf Proof of Theorem \ref{extremal}.} Let $n=n_1n_2$ be a square-free odd integer which is not divisible by 7 and 13 such that $n_1 =\prod_{i =1}^r p_i$ and $n_2=\prod_{j =1}^s q_j$  with primes $p_i \equiv 1 \pmod 3$ and $q_j \equiv 2 \pmod 3$. From Lemma \ref{ext1} we get that the sequences given in the statement of the theorem are $T_n$-extremal sequences in $\mathbb{Z}_n$.

Conversely let $S:x_1 \cdot x_2 \cdot \ldots \cdot x_{\ell}$ be an $T_n$-extremal sequence in $\mathbb{Z}_n$. If for every prime divisor $p$ of $n_1$, at least three term of $S$ are co-prime with $p$ and for every prime divisor $p$ of $n_2$, at least two term of $S$ are co-prime with $p$, then by Lemma \ref{three}, Lemma \ref{two} and Observation \ref{obs} we get a contradiction that $S$ is an $T_n$-weighted zero-sum sequence.

Thus there is a prime divisor $p$ of $n_1$ such that at most two term of $S$ are co-prime to $p$, or there is a prime divisor $p$ of $n_2$ such that at most one term of $S$ is co-prime to $p$.

{\noindent \sf Case 1.}

Let $p$ be a prime divisor of $n_1$ such that at most two terms of $S$ are co-prime to $p$. By Corollary \ref{exact}, exactly two term of $S$ are co-prime to $p$, say $x_1$ and $x_2$. Let $n^{'}=\frac{n}{p}$ and $x_i=py_i$ for all $3 \leq i \leq \ell$. Then the sequence $S^{'}: y_3 \cdot \ldots \cdot y_{\ell}$ of length $\ell-2=D_{T_{n^{'}}}(\mathbb{Z}_{n^{'}})-1$ is an $T_{n^{'}}$-extremal sequence in $\mathbb{Z}_{n^{'}}$. Since if $S^{'}$ has an $T_{n^{'}}$-weighted zero-sum subsequence, then similarly as in the proof of Lemma \ref{lem}, $S$ has an $T_{n}$-weighted zero-sum subsequence, a contradiction.

Also, the image of the sequence $x_1 \cdot x_2$ under the natural map $\mathbb{Z}_n \rightarrow \mathbb{Z}_p$ is $T_p$-extremal. Because if it is not extremal, then there exist $a, b \in T_n$ such that $a x_1+bx_2=0$ in $\mathbb{Z}_p$, and every term of the sequence $a x_1+bx_2 \cdot x_3 \cdot \ldots \cdot x_{\ell}$ is divisible by $p$. Therefore $S$ has an $T_n$-weighted zero-sum subsequence, a contradiction.

{\noindent \sf Case 2.}

Let $p$ be a prime divisor of $n_2$ such that at most one term of $S$ is co-prime to $p$. By Corollary \ref{exact}, exactly one term of $S$ is co-prime to $p$, say $x_1$. Let $n^{'}=\frac{n}{p}$ and $x_i=py_i$ for all $2 \leq i \leq \ell$. Then the sequence $S^{'}: y_2 \cdot \ldots \cdot y_{\ell}$ of length $\ell-1=D_{T_{n^{'}}}(\mathbb{Z}_{n^{'}})-1$ is an $T_{n^{'}}$-extremal sequence in $\mathbb{Z}_{n^{'}}$. Since if $S^{'}$ has an $T_{n^{'}}$-weighted zero-sum subsequence, then similarly as in the proof of Lemma \ref{lem}, $S$ has an $T_{n}$-weighted zero-sum subsequence, a contradiction.

This proves that the sequence $S$ is equivalent to the sequences given in the statement of the theorem. 
\qed

\medskip

\end{document}